
\documentclass{ws}

\begin{document}

\markboth{D. Mekerov \& M. Manev}{Natural Connection with Totally
Skew-Symmetric Torsion}

%
%

\title{NATURAL CONNECTION WITH TOTALLY SKEW-SYMMETRIC TORSION ON
RIEMANNIAN ALMOST PRODUCT MANIFOLDS
}

\author{DIMITAR MEKEROV$^\ast$ and MANCHO MANEV$^\dag$}

\address{Faculty of Mathematics and Informatics,
Plovdiv University\\ 236 Bulgaria Blvd, Plovdiv, 4003, Bulgaria\\
$^\ast$\email{mircho@uni-plovdiv.bg}
$^\dag$\email{mmanev@uni-plovdiv.bg} }

\newcommand{\ie}{\emph{i.e. }}
\newcommand{\eg}{\emph{e.g. }}
\newcommand{\X}{\mathfrak{X}}
\newcommand{\W}{\mathcal{W}}
\newcommand{\s}{\mathfrak{S}}
\newcommand{\g}{\mathfrak{g}}
\newcommand{\R}{\mathbb{R}}
\newcommand{\n}{\nabla}
\newcommand{\tr}{\mathrm{tr}}
\newcommand{\lm}{\lambda}
\newcommand{\ep}{\varepsilon}
\newcommand{\dd}{\mathrm{d}}

\newcommand{\norm}[1]{\left\Vert#1\right\Vert ^2}
\newcommand{\nP}{\norm{\nabla P}}

\newcommand{\thmref}[1]{The\-o\-rem~\ref{#1}}
\newcommand{\propref}[1]{Pro\-po\-si\-ti\-on~\ref{#1}}
\newcommand{\secref}[1]{Section~\ref{#1}}
\newcommand{\lemref}[1]{Lem\-ma~\ref{#1}}
\newcommand{\coref}[1]{Corollary~\ref{#1}}

\maketitle


\begin{abstract}
On a Riemannian almost product manifold $(M,P,g)$ we consider a
linear connection preserving the almost product structure $P$ and
the Riemannian metric $g$ and having a totally skew-symmetric
torsion. We determine the class of the manifolds $(M,P,g)$
admitting such a connection and prove that this connection is
unique in terms of the covariant derivative of $P$ with respect to
the Levi-Civita connection. We find a necessary and sufficient
condition the curvature tensor of the considered connection to
have similar properties like the ones of the K\"ahler tensor in
Hermitian geometry. We pay attention to the case when the torsion
of the connection is parallel. We consider this connection on a
Riemannian almost product manifold $(G,P,g)$ constructed by a Lie
group $G$.
\end{abstract}

\keywords{Riemannian manifold;  almost product structure;
non-integrable structure;  linear connection;  Bismut connection;
KT-connection; totally skew-symmetric torsion; parallel torsion;
Lie group;  Killing metric.}

\ccode{2010 Mathematics Subject Classification: 53C05, 53C15,
53C25, 53B05, 22E60}


\section*{Introduction}\label{sec-intro}

There is a strong interest in the metric connections with totally
skew-symmet\-ric torsion (3-form). These connections arise in a
natural way in theoretical and mathematical physics. For example,
such a connection is of particular interest in string theory
\cite{Stro}. In mathematics this connection was used by Bismut to
prove a local index theorem for non-K\"ahler Hermitian manifolds
\cite{Bis}. Such a connection is known as a \emph{KT-connection}
(K\"ahler with torsion) or a \emph{Bismut connection} on an almost
Hermitian manifold. The KT-geometry is a natural generalization of
the K\"ahler geometry, since when the torsion is zero the
KT-connection coincides with the Levi-Civita connection.
According
to Gauduchon \cite{Gaud}, on any Hermitian manifold, there exists
a unique connection preserving the almost complex structure and
the metric, whose torsion is totally skew-symmetric.

There are lots of physical applications involving a Riemannian
almost product manifold $(M,P,g)$, \ie a differentiable manifold
$M$ with an almost product structure $P$ and a Riemannian metric
$g$. The goal of the present work is to study the linear
connections on $(M,P,g)$ with totally skew-symmetric torsion
preserving the structures $P$ and $g$. We use the notions of a
\emph{Riemannian $P$-manifold}, a \emph{Riemannian $P$-tensor} and
a \emph{RPT-connection} (Riemannian $P$- with torsion) as
analogues of the notions of a K\"ahler tensor, a K\"ahler manifold
and a KT-connection  in Hermitian geometry. The point in question
in this work is the determination of the class of manifolds
admitting a RPT-connection, the uniqueness of the RPT-connection
on such manifolds and the investigation of an example of such a
connection.

The present paper is organized as follows. In Sec.~\ref{sec-prel}
we give necessary facts about Riemannian almost product manifolds.
We recall facts about natural connections (\ie connections
preserving $P$ and $g$) with torsion on Riemannian almost product
manifolds.
The main results are \thmref{thm-3.2} and \thmref{thm-3.4} in
Sec.~\ref{sec-KT}. We prove that a RPT-connection exists only on a
Riemannian almost product manifold in the class $\W_3$ from the
classification in \cite{Sta-Gri}. This classification is made
regarding the covariant derivatives of $P$ with respect to the
Levi-Civita connection $\n$. We establish the presence of a unique
RPT-connection $\n'$ which torsion is expressed by $\n P$. We find
a relation between the scalar curvatures for $\n$ and $\n'$ and
prove that they are equal if and only if $(M,P,g)$ is a Riemannian
$P$-manifold.
In Sec.~\ref{sec-Ptensor} we obtain a relation between the
curvature tensors of $\n$ and $\n'$. It is a necessary and
sufficient condition the curvature tensor of $\n'$ to be a
Riemannian $P$-tensor.
In Sec.~\ref{sec-parT} we consider the case when the torsion of
$\n'$ is parallel and its curvature tensor is a Riemannian
$P$-tensor.
In Sec.~\ref{sec-exa} we study the RPT-connection $\n'$ on an
example of a Riemannian almost product $\W_3$-manifold $(G,P,g)$
constructed in \cite{MekDobr} by a Lie group $G$.


\section{Preliminaries}\label{sec-prel}

Let $(M,P,g)$ be a \emph{Riemannian almost product manifold}, \ie
a differentiable manifold $M$ with a tensor field $P$ of type
$(1,1)$ and a Riemannian metric $g$ such that
\begin{equation*}\label{1}
    P^2x=x,\quad g(Px,Py)=g(x,y)
\end{equation*}
for arbitrary $x$, $y$ of the algebra $\X(M)$ of the smooth vector
fields on $M$. Obviously $g(Px,y)=g(x,Py)$  is valid.

Further $x,y,z,w$ will stand for arbitrary elements of $\X(M)$ or
vectors in the tangent space $T_pM$ at $p\in M$.

In this work we consider Riemannian almost product manifolds with
$\tr{P}=0$. In this case $(M,P,g)$ is an even-dimensional
manifold. If $\dim{M}=2n$ then the \emph{associated metric}
$\tilde{g}$ of $g$, determined by $\tilde{g}(x,y)=g(x,Py)$, is an
indefinite metric of signature $(n,n)$.

The classification in \cite{Sta-Gri} of Riemannian almost product
manifolds is made with respect to the tensor $F$ of type (0,3),
defined by
\begin{equation}\label{2}
F(x,y,z)=g\left(\left(\nabla_x P\right)y,z\right),
\end{equation}
where $\nabla$ is the Levi-Civita connection of $g$. The tensor $F$ has the following properties:
\begin{equation}\label{3}
    F(x,y,z)=F(x,z,y)=-F(x,Py,Pz),\quad F(x,y,Pz)=-F(x,Py,z).
\end{equation}
The basic classes of the classification in \cite{Sta-Gri} are
$\W_1$, $\W_2$ and $\W_3$. Their intersection is the class $\W_0$,
determined by the condition $F=0$ or equivalently $\n P=0$. A
manifold $(M,P,g)$ with  $\n P=0$ is called a \emph{Riemannian
$P$-manifold} in \cite{Sta87}. In this classification there are
included the classes $\W_1\oplus\W_2$, $\W_1\oplus\W_3$,
$\W_2\oplus\W_3$ and the class $\W_1\oplus\W_2\oplus\W_3$ of all
Riemannian almost product manifolds.

In the present work we consider manifolds of the class $\W_3$.
This class is determined by the condition
\begin{equation}\label{4}
    \mathop{\s}_{x,y,z} F(x,y,z)=0,
\end{equation}
where $\mathop{\s}_{x,y,z}$ is the cyclic sum by $x, y, z$. This
is the only class of the basic classes $\W_1$, $\W_2$ and $\W_3$,
where each manifold (which is not a Riemannian $P$-manifold) has a
non-integrable almost product structure $P$. This means that in
$\W_3$ the Nijenhuis tensor $N$, determined by
\begin{equation*}\label{4'}
    N(x,y)=\left(\nabla_x P\right)Py-\left(\nabla_y P\right)Px
    +\left(\nabla_{Px} P\right)y-\left(\nabla_{Py} P\right)x,
\end{equation*}
is non-zero.

Further, the manifolds of the class $\W_3$ we call
\emph{Riemannian almost product  $\W_3$-mani\-folds}.

A tensor $L$ of type (0,4) with pro\-per\-ties%
\begin{gather}
L(x,y,z,w)=-L(y,x,z,w)=-L(x,y,w,z),\label{9}\\
\mathop{\s} \limits_{x,y,z} L(x,y,z,w)=0 
,\label{10} \\
L(x,y,Pz,Pw)=L(x,y,z,w)\label{11}
\end{gather}
is called a \emph{Riemannian $P$-tensor} \cite{Mek8}.

As it is known the curvature tensor $R$ of a Riemannian manifold
with metric $g$ is determined by $
    R(x,y)z=\nabla_x \nabla_y z - \nabla_y \nabla_x z -
    \nabla_{[x,y]}z
$ and the corresponding tensor of type $(0,4)$ is defined as
follows $
    R(x,y,z,w)=g(R(x,y)z,w).
$ If the curvature tensor $R$ on a Riemannian manifold $(M,P,g)$
is a Riemannian $P$-tensor, then $(M,P,g)$ is a Riemannian
$P$-manifold.


Let the components of the inverse matrix of $g$ with respect to
the basis  $\{e_i\}$ of $T_pM$ be $g^{ij}$. Then the quantities
$\rho$ and $\tau$, determined by $\rho(y,z)=g^{ij}R(e_i,y,z,e_j)$
and $\tau=g^{ij}\rho(e_i,e_j)$, are the Ricci tensor and the
scalar curvature for $\n$, respectively.

The \emph{square norm} of $\nabla P$ is defined by
\begin{equation}\label{7''}
\nP=g^{ij}g^{ks}g\left(\left(\nabla_{e_i}P\right)e_k,\left(\nabla_{e_j}P\right)e_s\right).
\end{equation}
Obviously, $\nP=0$ is valid if and only if $(M,P,g)$ is a
Riemannian $P$-manifold.

Let $\n'$ be a linear connection with a tensor $Q$ of the
transformation $\n \rightarrow\n'$ and a torsion $T$, \ie
\begin{equation}\label{7'}
\n'_x y=\n_x y+Q(x,y),\quad T(x,y)=\n'_x y-\n'_y x-[x,y].
\end{equation}
The corresponding (0,3)-tensors are defined by
\begin{equation*}\label{12}
    Q(x,y,z)=g(Q(x,y),z), \qquad T(x,y,z)=g(T(x,y),z).
\end{equation*}
The symmetry of the Levi-Civita connection implies
\[
    T(x,y)=Q(x,y)-Q(y,x),\qquad
    T(x,y)=-T(y,x).
\]

A  decomposition of the space $\mathcal{T}$ of the torsion tensors
$T$ of type (0,3) (\ie $T(x,y,z)\allowbreak=-T(y,x,z)$) is valid
on a Riemannian  almost product manifold $(M,P,g)$:
$\mathcal{T}=\mathcal{T}_1\oplus\mathcal{T}_2\oplus\mathcal{T}_3\oplus\mathcal{T}_4$,
where $\mathcal{T}_i$ $(i=1,2,3,4)$ are the invariant orthogonal
subspaces \cite{Mi}. The projection operators $p_i$ of
$\mathcal{T}$ in $\mathcal{T}_i$ are determined as follows:
\begin{equation}\label{14'}
  \begin{split}
&
    p_1(x,y,z)=\frac{1}{8}\bigl\{2T(x,y,z)-T(y,z,x)-T(z,x,y)-T(Pz,x,Py)\bigr.\\
& \phantom{p_1(x,y,z)=\frac{1}{8}}
    +T(Py,z,Px)+T(z,Px,Py)-2T(Px,Py,z)\\
& \phantom{p_1(x,y,z)=\frac{1}{8}}
    +T(Py,Pz,x)+T(Pz,Px,y)-T(y,Pz,Px)\bigr\},\\
&
    p_2(x,y,z)=\frac{1}{8}\bigl\{2T(x,y,z)+T(y,z,x)+T(z,x,y)+T(Pz,x,Py)\bigr.\\
&  \phantom{p_2(x,y,z)=\frac{1}{8}}
    -T(Py,z,Px)-T(z,Px,Py)-2T(Px,Py,z)\\
& \phantom{p_2(x,y,z)=\frac{1}{8}}
    -T(Py,Pz,x)-T(Pz,Px,y)+T(y,Pz,Px)\bigr\},\\
  &
    p_3(x,y,z)=\frac{1}{4}\bigl\{T(x,y,z)+T(Px,Py,z)-T(Px,y,Pz)-T(x,Py,Pz)\bigr\},\\
  &
    p_4(x,y,z)=\frac{1}{4}\bigl\{T(x,y,z)+T(Px,Py,z)+T(Px,y,Pz)+T(x,Py,Pz)\bigr\}.
  \end{split}
\end{equation}

\begin{definition}[\cite{Mi}]\label{defn-2.1}
A linear connection $\n'$ on a Riemannian almost product manifold
$(M,P,g)$ is called a \emph{natural connection} if $\n' P=\n'
g=0$.
\end{definition}

If $\n'$ is a linear connection with a tensor $Q$ of the
transformation $\n \rightarrow\n'$ on a Riemannian almost product
manifold, then it is  a natural connection if and only if the
following conditions are satisfied \cite{Mi}:
\begin{gather}
    F(x,y,z)=Q(x,y,Pz)-Q(x,Py,z),\label{15}\\
    Q(x,y,z)=-Q(x,z,y).\label{16}\nonumber
\end{gather}

Let $\Phi$ be the (0,3)-tensor determined by
\begin{equation*}\label{17}
    \Phi(x,y,z)=g\left(\widetilde{\nabla}_x y - \n_x y,z \right),
\end{equation*}
where $\widetilde{\nabla}$ is the Levi-Civita connection of the
associated metric $\tilde{g}$.

\begin{theorem}[\cite{Mi}]\label{t-3.1}
A linear connection with torsion $T$ on a Riemannian almost
product manifold $(M,P,g)$ is natural if and only if
  \begin{eqnarray}
    4p_1(x,y,z)=&-&\Phi(x,y,z)+\Phi(y,z,x)-\Phi(x,Py,Pz)\nonumber\\ \label{18}
    &-&\Phi(y,Pz,Px)+2\Phi(z,Px,Py),\nonumber
\end{eqnarray}
\[
4p_3(x,y,z)=-g(N(x,y),z)=-2\left\{\Phi(z,Px,Py)+\Phi(z,x,y)\right\}.\nonumber\label{19}
\]
\hfill$\Box$
\end{theorem}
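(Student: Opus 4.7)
The plan is to start from the algebraic characterization of a natural connection given right after Definition 2.1: $\n'$ is natural iff the difference tensor $Q$ is skew-symmetric in its last two arguments (equation (16), equivalent to $\n' g=0$) and satisfies $F(x,y,z)=Q(x,y,Pz)-Q(x,Py,z)$ (equation (15), equivalent to $\n' P=0$). Combining (16) with $T(x,y,z)=Q(x,y,z)-Q(y,x,z)$ inverts in one line to
\[
Q(x,y,z)=\tfrac{1}{2}\bigl\{T(x,y,z)-T(y,z,x)+T(z,x,y)\bigr\},
\]
so the entire natural-connection content is encoded as a single relation on $T$ obtained by substituting this $Q$ into (15).

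Second, I would derive an explicit dictionary expressing $\Phi$ in terms of $F$. Writing Koszul's formula simultaneously for $\n$ and for $\widetilde{\n}$ applied to $\tilde g(\cdot,\cdot)=g(P\cdot,\cdot)$, and taking the difference paired against $Pz$, the direct derivatives of the metric cancel; the surviving bracket-terms rewrite via $g((\n_u P)v,w)=F(u,v,w)$ and the symmetries (3) into a closed-form linear combination giving $\Phi(x,y,z)$ as a sum of permuted values of $F$. This is the essential bridge between the differential object $\Phi$ and the algebraic object $F$.

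With $Q$ expressed in terms of $T$ via the inversion above and $F$ expressed in terms of $Q$ via (15), I would then compute $p_1(T)$ and $p_3(T)$ directly from the projection formulas (14'). For $p_3$, every pair of arguments in the four terms defining $p_3$ carries $P$ in a balanced position, so repeated use of (15) converts the resulting $Q$-combination into an $F$-expression which, upon reassembly, matches the definition of the Nijenhuis tensor; this yields $4p_3(x,y,z)=-g(N(x,y),z)$, and passing from $F$ to $\Phi$ via step two produces the stated $-2\{\Phi(z,Px,Py)+\Phi(z,x,y)\}$. For $p_1$, the same mechanism is applied to its nine defining terms: each becomes an $F$-expression, the symmetries (3) force heavy cancellations, and the residue rearranges into the five-term $\Phi$-combination of the statement.

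The main obstacle is the sheer bookkeeping in steps two and three: both the $\Phi$--$F$ dictionary and the collapse of the nine-term $p_1$-sum are long symbolic manipulations in which the identities (3) must be applied in the right order to keep the computation tractable. The converse direction, on the other hand, requires no separate calculation: the forward computation shows that condition (15) on $Q$ constrains exactly the $\mathcal{T}_1\oplus\mathcal{T}_3$-component of $T$ (the $\mathcal{T}_2$ and $\mathcal{T}_4$ components remain free, reflecting the freedom in choosing a natural connection), so the two displayed identities are jointly equivalent to $\n'$ being natural.
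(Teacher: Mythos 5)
First note that the paper itself gives no proof of \thmref{t-3.1}: the theorem is quoted from \cite{Mi} with the proof omitted, so your attempt can only be judged on its own merits; on those merits, the forward direction of your plan is sound. The inversion $Q(x,y,z)=\frac{1}{2}\left\{T(x,y,z)-T(y,z,x)+T(z,x,y)\right\}$ is correct once the metricity condition $Q(x,y,z)=-Q(x,z,y)$ is assumed; the Koszul computation you sketch does close up, giving the dictionary $2\Phi(x,y,z)=F(x,y,Pz)+F(y,x,Pz)-F(Pz,x,y)$; and rewriting \eqref{15} in the equivalent form $Q(x,y,z)-Q(x,Py,Pz)=F(x,y,Pz)$ and substituting into the projection formulas \eqref{14'} does collapse, after exactly the cancellations you predict (grouping the twenty $Q$-terms of $4p_1$ by first argument, the $Pz$-block dies by skew-symmetry alone), to $4p_3(x,y,z)=-g(N(x,y),z)=-2\left\{\Phi(z,Px,Py)+\Phi(z,x,y)\right\}$ and to the five-term $\Phi$-combination for $4p_1$; both identities check out against the dictionary and the symmetries \eqref{3}.

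The gap is in the converse, which you dismiss as requiring ``no separate calculation.'' Two points. First, the equivalence can only be read within metric connections: your own step one consumes the condition $Q(x,y,z)=-Q(x,z,y)$ before encoding anything in $T$, and for a general linear connection the torsion does not determine $Q$ (adding a symmetric, non-natural difference tensor to a natural connection leaves $T$, hence both displayed identities, unchanged), so metricity must be a standing hypothesis rather than a consequence. Second, and more substantively, your forward computation proves only one inclusion: a skew $Q$ with $Q(x,y,Pz)=Q(x,Py,z)$ yields $p_1(T)=p_3(T)=0$, \ie the kernel of the map $T\mapsto Q(x,y,Pz)-Q(x,Py,z)$ lies \emph{inside} $\mathcal{T}_2\oplus\mathcal{T}_4$. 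The ``if'' direction needs the \emph{reverse} inclusion: given $T$ with the stated $p_1$, $p_3$, one writes $T=T^0+S$, where $T^0$ is the torsion of some reference natural connection (\eg the canonical connection of \cite{Mi}, whose existence you must also invoke), so that $S\in\mathcal{T}_2\oplus\mathcal{T}_4$, and one must then show that a torsion correction lying in $\mathcal{T}_2\oplus\mathcal{T}_4$ does not destroy $\n'P=0$. That is a genuine extra verification --- immediate on $\mathcal{T}_4$, where $T(x,y,z)=T(Px,Py,z)=T(Px,y,Pz)=T(x,Py,Pz)$ makes the terms of $Q(x,y,Pz)-Q(x,Py,z)$ cancel pairwise, but requiring a similar direct check on $\mathcal{T}_2$ or a dimension count in a $\pm1$-eigenbasis of $P$ --- and without it your argument establishes only the ``only if'' half of the theorem.
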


\begin{theorem}[\cite{MekDobr}]\label{thm-2.2}
    For the torsion $T$ of a natural connection on a Riemannian $\W_3$-manifold
    $(M,P,g)\notin\W_0$ the following properties
    hold
    \begin{equation*}\label{21'}
        p_1=0,\qquad p_3\neq 0.
    \end{equation*}\hfill$\Box$
\end{theorem}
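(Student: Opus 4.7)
The plan is to invoke Theorem~\ref{t-3.1}, which already expresses both projections $p_1$ and $p_3$ of the torsion of any natural connection in terms of the tensor $\Phi$ built from the two Levi-Civita connections $\n$ and $\widetilde{\nabla}$. Since $\Phi$ does not depend on the particular natural connection $\n'$, both $p_1$ and $p_3$ of $T$ are intrinsic invariants of $(M,P,g)$, and the whole question reduces to a purely geometric statement about $\Phi$ on a $\W_3$-manifold.

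The first step is to compute $\Phi$ in terms of $F$ by writing the Koszul formulas for $g$ and for the associated metric $\tilde g$ at the triple $(x,y,Pz)$ and subtracting them. The first-order derivative terms $x g(y,Pz)$, $y g(x,Pz)$, $z g(x,Py)$ each pick up a contribution $g(\cdot,(\nabla P)\cdot)=F(\cdot,\cdot,\cdot)$ from differentiating $P$, and the $\n$-terms in the remaining brackets cancel out. The surviving residue is $2\Phi(x,y,Pz)=F(x,y,z)+F(y,z,x)-F(z,x,y)$. Here the defining condition (\ref{4}) of $\W_3$ enters: it replaces $F(x,y,z)+F(y,z,x)$ by $-F(z,x,y)$, so the right-hand side collapses to $-2F(z,x,y)$. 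Using $P^2=\mathrm{id}$ this yields the clean identity
\[
\Phi(x,y,z)=-F(Pz,x,y),
\]
valid on any manifold in $\W_3$.

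Next I would substitute this formula into the $p_1$-identity of \thmref{t-3.1}. Each of the five resulting terms becomes an $F$-tensor with exactly one $P$ in its arguments, and a repeated application of the symmetries (\ref{3}) (especially $F(x,y,Pz)=-F(x,Py,z)$ and the symmetry of $F$ in its last two slots) together with the cyclic identity (\ref{4}) applied after substitutions of the form $x\mapsto Px$ should collapse the five terms pairwise to $0$. Concretely, one obtains from (\ref{4}) that $F(Px,y,z)+F(z,Px,y)=-F(y,z,Px)$, which reduces the five-term expression to a three-term one $F(Pz,x,y)-F(y,z,Px)-F(x,Py,z)$; a further cyclic substitution in (\ref{4}) and one use of $F(\cdot,\cdot,P\cdot)=-F(\cdot,P\cdot,\cdot)$ cancels it entirely. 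I expect this algebraic cancellation to be the main obstacle: each manipulation is elementary, but the challenge is to choose the correct sequence of symmetry moves so that all signs line up and no residual term is left.

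Finally, the inequality $p_3\neq 0$ is immediate from the second identity in \thmref{t-3.1}, $4p_3(x,y,z)=-g(N(x,y),z)$, which is manifestly independent of the particular natural connection. As noted in the preliminaries, every manifold in $\W_3\setminus\W_0$ has a non-integrable almost product structure, so $N\neq 0$, hence $p_3\neq 0$.
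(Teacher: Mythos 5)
Your proposal is correct. Note first that the paper itself contains no proof of this statement: it is imported from \cite{MekDobr} (hence the box after the statement), so there is no in-paper argument to compare against; your reconstruction supplies exactly the kind of argument the citation suppresses, routed entirely through \thmref{t-3.1}. The logical skeleton is sound: since $\Phi$ depends only on $g$ and $P$, \thmref{t-3.1} makes $p_1$ and $p_3$ invariants of $(M,P,g)$ independent of the choice of natural connection, so the claim reduces to identities for $\Phi$. I checked your two computations. Subtracting the Koszul formulas for $g$ and $\tilde g$ does give $2\Phi(x,y,Pz)=F(x,y,z)+F(y,z,x)-F(z,x,y)$, with all $\n$-terms cancelling in pairs, and the $\W_3$-condition \eqref{4} collapses this to $\Phi(x,y,z)=-F(Pz,x,y)$. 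Substitution into the $p_1$-identity of \thmref{t-3.1} yields $4p_1(x,y,z)=F(Pz,x,y)-F(Px,y,z)+F(z,x,Py)+F(x,y,Pz)-2F(y,z,Px)$; applying \eqref{4} at the triple $(Px,y,z)$ together with $F(z,x,Py)=-F(z,Px,y)$ reduces this to your three-term expression $F(Pz,x,y)-F(x,Py,z)-F(y,z,Px)$, and then \eqref{4} at $(Pz,x,y)$, one use of $F(\cdot,\cdot,P\cdot)=-F(\cdot,P\cdot,\cdot)$ and the symmetry of $F$ in its last two slots annihilate it, so $p_1=0$ as you claim; your derived $\Phi$-formula is also consistent with the expressions \eqref{26} appearing later in the paper, which is a good sanity check. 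Finally, the step $p_3\neq 0$ via $4p_3(x,y,z)=-g(N(x,y),z)$ and the fact, recorded in the preliminaries, that every $\W_3$-manifold outside $\W_0$ has $N\neq 0$, is legitimate; if one wanted full self-containment one would still have to prove that non-integrability statement (equivalently, that $p_3=0$ forces $F=0$ via \eqref{3} and \eqref{4}), but relying on it as stated is consistent with how the paper itself uses it.
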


\section{RPT-Connection on a Riemannian Almost Product Manifold}\label{sec-KT}


Let $\n'$ be a metric connection with totally skew-symmetric
torsion $T$ on a Riemannian manifold $(M,g)$. Since $T$ is a
3-form, \ie
\begin{equation*}\label{22}
    T(x,y,z)=-T(y,x,z)=-T(x,z,y)=-T(z,y,x),
\end{equation*}
it is valid
\begin{equation}\label{23}
    Q(x,y,z)=\frac{1}{2}T(x,y,z).
\end{equation}
Then we have
\[
    g\left(\n'_xy,z\right)=g\left(\n_xy,z\right)+\frac{1}{2}T(x,y,z).
\]

The curvature tensors $R$ and $R'$ of $\n$ and $\n'$ and the
corresponding Ricci tensors $\rho$ and $\rho'$ are related via the
formulae (see \eg \cite{IvPapa, FriIv}):
\begin{eqnarray}\label{23'}
    R(x,y,z,w)=R'(x,y,z,w)&-&\frac{1}{2}\left(\n'_x T\right)(y,z,w)+\frac{1}{2}\left(\n'_y
    T\right)(x,z,w)
 \nonumber \\
       &-&\frac{1}{4}g\left(T(x,y),T(z,w)\right)-\frac{1}{4}\sigma^T\left(x,y,z,w)\right),
\end{eqnarray}
\begin{equation}\label{23'''}
\begin{split}
    \rho(y,z)=\rho'(y,z)-\frac{1}{2}g^{ij}\left(\n'_{e_i} T\right)(y,z,e_j)
       -\frac{1}{4}g^{ij}g\left(T(e_i,y),T(z,e_j)\right),
\end{split}
\end{equation}
where $\sigma^T$ is the 4-form determined by
\begin{equation}\label{23''}
    \sigma^T(x,y,z,w))=\mathop{\s} \limits_{x,y,z} g\left(T(x,y),T(z,w)\right).
\end{equation}
Since $\sigma^T$ and $T$ are forms then the scalar curvatures
$\tau$ and $\tau'$ for $\n$ and $\n'$ satisfy the following
relation
\begin{equation}\label{23''''}
    \tau=\tau'-\frac{1}{4}g^{ij}g^{ks}g\left(T(e_i,e_k),T(e_s,e_j)\right).
\end{equation}

\begin{definition}\label{defn-3.1}
A natural connection with totally skew-symmetric torsion on a
Riemannian almost product manifold $(M,P,g)$ is called a
\emph{RPT-connection}.
\end{definition}

\begin{theorem}\label{thm-3.2}
If a Riemannian almost product manifold $(M,P,g)$ admits a
RPT-connection then $(M,P,g)$ is a Riemannian almost product
$\W_3$-manifold.
\end{theorem}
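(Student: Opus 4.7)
The plan is to combine the naturality of the connection with the total skew-symmetry of the torsion, and read off the $\W_3$ condition directly from the resulting expression for $F$.

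First, I would use the fact that the torsion $T$ is a $3$-form, so by the relation \eqref{23} we have $Q(x,y,z)=\tfrac{1}{2}T(x,y,z)$. Since $\n'$ is natural, the identity \eqref{15} gives
\[
F(x,y,z)=Q(x,y,Pz)-Q(x,Py,z)=\tfrac{1}{2}\bigl\{T(x,y,Pz)-T(x,Py,z)\bigr\}.
\]
So everything reduces to showing that the cyclic sum of the right-hand side over $x,y,z$ is zero, because by \eqref{4} that is exactly the defining property of the class $\W_3$.

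Next I would apply the cyclic invariance of a $3$-form, namely $T(a,b,c)=T(b,c,a)=T(c,a,b)$, to rewrite each of the three summands of $\s_{x,y,z}T(x,Py,z)$ so that the entry $P(\cdot)$ sits in the third slot. Concretely $T(y,Pz,x)=T(x,y,Pz)$ and $T(z,Px,y)=T(y,z,Px)$, and similarly $T(x,Py,z)=T(z,x,Py)$. The three terms then match three of the terms in $\s_{x,y,z}T(x,y,Pz)$, so the two cyclic sums cancel termwise. Hence $\s_{x,y,z}F(x,y,z)=0$, and $(M,P,g)\in\W_3$.

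The only thing that requires any care is the bookkeeping of cyclic permutations, but since $T$ is totally skew and the $P$ only appears on a single argument in each term, it is a short matching argument rather than a genuine obstacle. No use of the other properties of $F$ from \eqref{3} is needed; the result follows solely from naturality plus the $3$-form character of $T$.
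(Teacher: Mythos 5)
Your proof is correct and takes essentially the same route as the paper's: the paper likewise combines the naturality condition \eqref{15} with the fact that $Q=\frac{1}{2}T$ is a $3$-form, using the cyclic identity $Q(x,y,Pz)=Q(y,Pz,x)$ (its equation \eqref{25}) to make the cyclic sum in \eqref{24} vanish and so obtain the $\W_3$ condition \eqref{4}. The only difference is cosmetic: you work with $T$ instead of $Q$ and write out explicitly the termwise matching of the two cyclic sums that the paper leaves implicit.
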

\begin{proof}
Let $\n'$ is a RPT-connection on a Riemannian almost product
manifold \allowbreak $(M,P,g)$. Since $\n'$ is a natural
connection then the tensor $Q$ of the transformation $\n
\rightarrow\n'$ satisfies \eqref{15}, which implies
\begin{equation}\label{24}
    \mathop{\s} \limits_{x,y,z} F(x,y,z)=\mathop{\s} \limits_{x,y,z}\left\{Q(x,y,Pz)-Q(x,Py,z)\right\}.
\end{equation}
According to \eqref{23}, the tensor $Q$ is also 3-form and then we
have
\begin{equation}\label{25}
    Q(x,y,Pz)=Q(y,Pz,x).
\end{equation}

We apply \eqref{25} to \eqref{24} and obtain the characteristic
condition \eqref{4} for the class $\W_3$.
\end{proof}

From \eqref{15}, \eqref{23} and \eqref{4} we obtain the following
properties of the torsion $T$ of a RPT-connection:
\begin{eqnarray}\label{26'}
    T(x,y,z)&=&T(Px,Py,z)-2F(z,y,Px)\nonumber\\
    &=&T(Px,y,Pz)-2F(y,x,Pz)
    =T(x,Py,Pz)-2F(x,Py,z).
\end{eqnarray}

Let us suppose that $(M,P,g)$ is not a Riemannian $P$-manifold.
The condition $(M,P,g)\notin\W_0$, \thmref{thm-2.2}, \eqref{14'}
and \eqref{26'} imply the following properties:
\begin{equation}\label{26}
  \begin{array}{l}
    p_1(x,y,z)=0,\\
    p_2(x,y,z)=F(z,x,Py)\neq 0,\\
    p_3(x,y,z)=\frac{1}{2}\bigl\{F(x,y,Pz)+F(y,z,Px)-F(z,x,Py)\bigr\}\neq 0,\\
    p_4(x,y,z)=T(x,y,z)-\frac{1}{2}\mathop{\s}
    \limits_{x,y,z}F(x,y,Pz).
  \end{array}
\end{equation}

\begin{theorem}\label{thm-3.4}
On any Riemannian almost product $\W_3$-manifold
$(M,P,g)\notin\W_0$ there exists a unique RPT-connection $\n'$
which torsion is expressed by the tensor $F$. The following
properties are valid for the torsion $T$ of $\n'$:
\[
T\in\mathcal{T}_2\oplus\mathcal{T}_3,\quad
T\notin\mathcal{T}_2,\quad T\notin\mathcal{T}_3,
\]
\begin{equation}\label{21}
        T(x,y,z)=\frac{1}{2}\mathop{\s} \limits_{x,y,z}
        F(x,y,Pz).
\end{equation}
\end{theorem}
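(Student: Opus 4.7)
The plan is to prove the theorem in two stages. First, I will construct $T$ explicitly by the formula \eqref{21}, verify that it is totally skew-symmetric, and check that the associated connection $\n' = \n + \tfrac{1}{2}T$ is natural, which gives existence. Second, I will compute the projections $p_i(T)$ and deduce both the class membership $T \in \mathcal{T}_2 \oplus \mathcal{T}_3$ and the uniqueness.

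\textbf{Existence.} Set $T(x,y,z) := \tfrac{1}{2}\mathop{\s}_{x,y,z} F(x,y,Pz)$. First I would check that $T$ is a 3-form: cyclicity is built in, and the swap-antisymmetry $T(y,x,z) = -T(x,y,z)$ follows by applying \eqref{3} term by term (using $F(x,y,z) = F(x,z,y)$ and $F(x,y,Pz) = -F(x,Py,z)$). Setting $Q := \tfrac{1}{2}T$, the natural-connection condition \eqref{15} for $\n' := \n + Q$ amounts to $2F(x,y,z) = T(x,y,Pz) - T(x,Py,z)$. Expanding both sides via the definition of $T$, using $P^2 = \mathrm{id}$ and \eqref{3}, and invoking the $\W_3$-condition \eqref{4} to replace $F(y,z,x) + F(z,x,y)$ by $-F(x,y,z)$, the problem reduces to the auxiliary identity
\[
F(Pz,x,Py) - F(Py,z,Px) = F(x,y,z).
\]
I would prove this by applying \eqref{4} to the triples $(Pz,x,Py)$, $(Py,z,Px)$, and $(y,Pz,Px)$, using \eqref{3} to simplify the $P^2$-terms, and cancelling the common $F(Py,Px,z)$-contribution that appears on both sides after a single $P$-shift.

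\textbf{Decomposition and uniqueness.} For the torsion $T$ of any RPT-connection, \eqref{15} combined with $Q = T/2$ and $T$ being a 3-form yields the transposition identities \eqref{26'}. Substituting \eqref{26'} into the projection formulas \eqref{14'} and collapsing cyclic $T$-sums via total skew-symmetry (so that $T(y,z,x)+T(z,x,y) = 2T(x,y,z)$), one recovers \eqref{26}: $p_1 = 0$ (in accordance with \thmref{thm-2.2}), the stated nonzero expressions for $p_2$ and $p_3$, and $p_4 = T - \tfrac{1}{2}\mathop{\s}_{x,y,z} F(x,y,Pz)$. Since the constructed $T$ satisfies $p_4 = 0$ by definition, one has $T \in \mathcal{T}_2 \oplus \mathcal{T}_3$, and uniqueness follows from the fact that $p_1(T), p_2(T), p_3(T)$ are all fixed by $F$ via \eqref{26}. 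The hypothesis $(M,P,g) \notin \W_0$ forces $F \not\equiv 0$, so $p_2$ and $p_3$ are both nonzero, giving $T \notin \mathcal{T}_2$ and $T \notin \mathcal{T}_3$.

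The main obstacle will be the algebraic verification of the auxiliary identity $F(Pz,x,Py) - F(Py,z,Px) = F(x,y,z)$ in the existence step. This is the one point where the $\W_3$-condition \eqref{4} must be applied in a non-tautological way, and the chaining of its applications to $P$-rotated triples together with the two nontrivial $F$-identities in \eqref{3} requires careful bookkeeping.
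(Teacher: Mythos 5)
Your existence and decomposition steps are sound, and the existence step actually goes beyond the paper: the paper derives \eqref{26'} and \eqref{26} only for an \emph{assumed} RPT-connection and never explicitly verifies that the tensor defined by \eqref{21} is totally skew-symmetric and satisfies the naturality condition \eqref{15}, whereas you check both directly. Your auxiliary identity $F(Pz,x,Py)-F(Py,z,Px)=F(x,y,z)$ is in fact correct on a $\W_3$-manifold: applying \eqref{4} to the triples $(Pz,x,Py)$, $(Py,z,Px)$ and $(Px,Py,z)$ and simplifying with \eqref{3} closes the computation, and with it the reduction of \eqref{15} for $Q=T/2$ works out. The derivation of \eqref{26} by substituting \eqref{26'} into \eqref{14'} is exactly the paper's route; your claim that $F\not\equiv 0$ forces $p_3\neq 0$ does need the small extra observation (which the paper makes by a contradiction argument) that $p_3=0$ together with \eqref{4} yields $F=0$, but that is a minor bookkeeping point.

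The genuine gap is in your uniqueness step. Equalities \eqref{26} fix $p_1$, $p_2$, $p_3$ in terms of $F$, but the fourth line of \eqref{26} expresses $p_4$ through $T$ itself, so it is circular: knowing $p_1(T),p_2(T),p_3(T)$ determines $T$ only modulo its $\mathcal{T}_4$-component, and your sentence ``uniqueness follows from the fact that $p_1(T),p_2(T),p_3(T)$ are all fixed by $F$'' is a non sequitur. Concretely, if $S$ is any 3-form satisfying $S(x,y,Pz)=S(x,Py,z)$ --- equivalently, by the eigenspace decomposition of $P$, any 3-form supported entirely on one $P$-eigendistribution, and such forms lie in $\mathcal{T}_4$ --- then replacing $T$ by $T+S$ leaves the right-hand side of \eqref{15} unchanged and preserves the metric condition $Q(x,y,z)=-Q(x,z,y)$, so $\n+\frac{1}{2}(T+S)$ is again an RPT-connection. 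Hence $p_4=0$ cannot be deduced from naturality and skew-symmetry alone: for $\dim M\geq 6$ the eigendistributions are at least 3-dimensional and nonzero such $S$ exist pointwise, so the unrestricted uniqueness you assert would actually fail there (in dimension 4, as in the paper's example, it holds vacuously because $\Lambda^3$ of a 2-plane is zero). The paper's own proof avoids asserting more than it can show by imposing the additional condition $T\in\mathcal{T}_2\oplus\mathcal{T}_3$ (``torsion expressed by $F$''), under which \eqref{26} does force $p_4=0$ and hence \eqref{21}; to repair your argument you must either adopt this restricted formulation of uniqueness or supply a proof that $p_4=0$ for an arbitrary RPT-torsion --- which, by the above, is not available in general.
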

\begin{proof}
By \eqref{26} we establish directly that a RPT-connection on a
Riemannian almost product $\W_3$-manifold $(M,P,g)\notin\W_0$ has
a torsion $T=p_2+p_3+p_4$, \ie
$T\in\mathcal{T}_2\oplus\mathcal{T}_3\oplus\mathcal{T}_4$. Let us
suppose that $T=p_2+p_4$. Then \eqref{26} and \eqref{4} imply
$F=0$, \ie $(M,P,g)\in\W_0$ which is a contradiction. Therefore
$T\neq p_2+p_4$ holds. Analogously we establish that $T\neq
p_3+p_4$. Consequently we obtain
$T\notin\mathcal{T}_2\oplus\mathcal{T}_4$ and
$T\notin\mathcal{T}_3\oplus\mathcal{T}_4$. The condition
$T\in\mathcal{T}_2\oplus\mathcal{T}_3$, \ie $T=p_2+p_3$, according
to \eqref{26} implies $p_4=0$, which is \eqref{21}.
\end{proof}

Let $\n'$ is the RPT-connection with torsion $T$ determined by
\eqref{21}. Then \eqref{23}, \eqref{2} and \eqref{4} imply
    \begin{gather}
        Q(x,y,z)=-\frac{1}{4}\left\{F(x,Py,z)-F(Px,y,z)-2F(y,Px,z)\right\},\label{27}\\
    Q(x,y)=-\frac{1}{4}\bigl\{\left(\n_{x} P\right)Py-\left(\n_{Px} P\right)y-2\left(\n_{y} P\right)Px
    \bigr\}.\label{28}
    \end{gather}
Bearing in mind \eqref{23} and \eqref{28}, we have
\begin{equation*}
    T(x,y)=-\frac{1}{2}\bigl\{\left(\n_{x} P\right)Py
    -\left(\n_{Px} P\right)y-2\left(\n_{y} P\right)Px
    \bigr\}.
\end{equation*}
According to \eqref{23''''}, \eqref{7''} and the latter equality,
we obtain the following
\begin{proposition}\label{prop-3.5}
The scalar curvatures $\tau$ and $\tau'$ for the connections $\n$
and $\n'$ on a Riemannian almost product $\W_3$-manifold $(M,P,g)$
are related via the formula
\begin{equation*}\label{27'}
    \tau=\tau'+\frac{3}{8}\nP.
\end{equation*}\hfill$\Box$
\end{proposition}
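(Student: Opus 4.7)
The plan is to combine the general identity \eqref{23''''} with an explicit evaluation of $\norm{T}$ coming from formula \eqref{21}. The first observation is that since $T$ is totally skew-symmetric, swapping the dummy pair $s\leftrightarrow j$ and using $T(e_s,e_j,e_b)=-T(e_j,e_s,e_b)$ yields
\[
g^{ij}g^{ks}g\bigl(T(e_i,e_k),T(e_s,e_j)\bigr)=-\norm{T},
\]
where $\norm{T}:=g^{ij}g^{ks}g^{ab}T(e_i,e_k,e_a)T(e_j,e_s,e_b)$. Hence \eqref{23''''} collapses to $\tau=\tau'+\tfrac{1}{4}\norm{T}$, and the proposition is equivalent to the identity $\norm{T}=\tfrac{3}{2}\nP$.

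To evaluate $\norm{T}$ I would use \eqref{21}. Setting $\tilde F(x,y,z):=F(x,y,Pz)$, formula \eqref{21} reads $2T(x,y,z)=\mathop{\s}\limits_{x,y,z}\tilde F(x,y,z)$; in an orthonormal basis $\{e_i\}$ one then has
\[
\norm{T}=\tfrac{1}{4}\sum_{i,k,a}\bigl[\tilde F(e_i,e_k,e_a)+\tilde F(e_k,e_a,e_i)+\tilde F(e_a,e_i,e_k)\bigr]^2.
\]
Cyclic relabeling of $(i,k,a)$ shows that each of the three diagonal contributions equals $\sum\tilde F(e_i,e_k,e_a)^2$, which in turn equals $\nP$ because $P$ is an isometry (so $\{Pe_a\}$ is orthonormal and the sum over $a$ is orthogonally invariant), and that each of the three cross contributions equals $K:=\sum\tilde F(e_i,e_k,e_a)\tilde F(e_k,e_a,e_i)$. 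Thus $\norm{T}=\tfrac{3}{4}\nP+\tfrac{3}{2}K$, and it remains to prove $K=\tfrac{1}{2}\nP$.

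The main technical step is the evaluation of $K$. Using \eqref{3} in the form $F(x,y,Pz)=-F(x,Py,z)$ together with the symmetry $F(x,y,z)=F(x,z,y)$, and absorbing the $P$'s by the orthogonal change of basis $e_a\mapsto Pe_a$ inside the summation, $K$ can be rewritten as a pure quadratic contraction of $F\otimes F$. A short index manipulation exploiting the symmetry of $F$ in its last two slots shows that the \emph{a priori} distinct non-trivial index-permutation contractions of $F\otimes F$ all coincide with a single quantity $L$. Contracting the $\W_3$ identity \eqref{4} against $F(e_i,e_j,e_k)$ then yields the single linear relation $\nP+2L=0$, from which $L=-\tfrac{1}{2}\nP$ and hence $K=\tfrac{1}{2}\nP$. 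Substituting back gives $\norm{T}=\tfrac{3}{2}\nP$, and the proposition follows.
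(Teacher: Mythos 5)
Your proof is correct and follows essentially the same route as the paper: the paper likewise substitutes the explicit torsion (there in the form derived from \eqref{28}, equivalent to your use of \eqref{21}) into the trace formula \eqref{23''''} and identifies the result with $\tfrac{3}{8}\nP$ via \eqref{7''}, merely omitting the contraction details. Your reduction to $\norm{T}=\tfrac{3}{2}\nP$, together with the identity $\nP+2L=0$ obtained by contracting the $\W_3$ condition \eqref{4} against $F$, correctly supplies exactly the computation the paper leaves implicit.
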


\begin{corollary}\label{prop-3.6}
The scalar curvatures for the connections $\n$ and $\n'$ on a
Riemannian almost product $\W_3$-manifold $(M,P,g)$ are equal if
and only if $(M,P,g)$ is a Riemannian $P$-manifold. \hfill$\Box$
\end{corollary}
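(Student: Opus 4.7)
The plan is to derive the corollary as a direct consequence of \propref{prop-3.5} combined with the characterization of Riemannian $P$-manifolds via the square norm of $\n P$ recorded after equation~\eqref{7''}.

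First I would invoke \propref{prop-3.5}, which gives the identity $\tau=\tau'+\tfrac{3}{8}\nP$ on any Riemannian almost product $\W_3$-manifold. Rearranging, the equality $\tau=\tau'$ is equivalent to $\tfrac{3}{8}\nP=0$, and since the coefficient $\tfrac{3}{8}$ is a nonzero constant, this reduces to $\nP=0$.

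Next I would appeal to the observation made right after \eqref{7''}, namely that $\nP=0$ holds if and only if $(M,P,g)$ is a Riemannian $P$-manifold (equivalently, $\n P=0$); this is immediate because $\nP$ is the squared norm of $\n P$ taken with respect to the \emph{positive definite} Riemannian metric $g$, so its vanishing is equivalent to the vanishing of $\n P$ itself. Chaining the two equivalences yields the stated biconditional, which finishes the proof.

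There is essentially no obstacle here: the corollary is a one-line consequence of the scalar curvature formula in \propref{prop-3.5} together with the positive definiteness of $g$. The only minor point one should be a bit careful about is to note explicitly that positive definiteness (rather than the indefinite associated metric $\tilde{g}$) is what allows one to conclude $\n P=0$ from the vanishing of the squared norm $\nP$; otherwise the argument is completely formal.
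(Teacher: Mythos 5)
Your proposal is correct and coincides with the paper's (implicit) argument: the corollary is stated with no separate proof precisely because it follows immediately from \propref{prop-3.5} together with the remark after \eqref{7''} that $\nP=0$ if and only if $(M,P,g)$ is a Riemannian $P$-manifold. Your extra observation that the positive definiteness of $g$ is what justifies deducing $\n P=0$ from $\nP=0$ is a sound and worthwhile clarification, but it does not change the route.
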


\subsection{A relation of the RPT-connection with other connections}

Now we will find the relation between the RPT-connection $\n'$ and
other two characteristic natural connections on a Riemannian
almost product $\W_3$-manifold.

The canonical connection $\n^C$ on a Riemannian almost product
manifold $(M,P,g)$ is a natural connection introduced in \cite{Mi}
as an analogue of the Hermitian connection in Hermitian geometry
(\cite{Li2, Li1, Gray-Ba-Na-Van}). This connection on
$(M,P,g)\in\W_3$ is studied in \cite{MekDobr}, where for the
tensor $Q^C$ of the transformation $\n \rightarrow\n^C$ it is
obtained
  \begin{equation}\label{28'}
        Q^C(x,y,z)=-\frac{1}{4}\left\{F(y,Px,z)-F(Py,x,z)+2F(x,Py,z)\right\}.
  \end{equation}

The $P$-connection $\n^P$ on a Riemannian almost product manifold
$(M,P,g)$ is a natural connection introduced in \cite{Mek8} as an
analogue of the first canonical connection of Lichnerowicz in
Hermitian geometry (\cite{Gaud, Li, Ya}). In \cite{Mek8} this
connection is studied on $(M,P,g)\in\W_3$, where for the tensor
$Q^P$ of the transformation $\n \rightarrow\n^P$ it is obtained
  \begin{equation}\label{28''}
        Q^P(x,y,z)=-\frac{1}{2}F(x,Py,z).
  \end{equation}

Bearing in mind \eqref{27}, \eqref{28'}, \eqref{28''}, \eqref{3}
and \eqref{4}, we get $Q^P=\frac{1}{2}\left(Q^C+Q\right)$ and
therefore it is valid the following
\begin{proposition}\label{prop-3.7}
The $P$-connection $\n^P$ on a Riemannian almost product
$\W_3$-manifold $(M,P,g)$ is the average connection of the
canonical connection $\n^C$ and the RPT-connection $\n'$, \ie
\[
\n^P=\frac{1}{2}\left(\n^C+\n'\right).
\]
\hfill$\Box$
\end{proposition}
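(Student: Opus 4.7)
The plan is to reduce the proposition to the tensor-level identity $Q^P = \tfrac{1}{2}\left(Q^C + Q\right)$. Indeed, by the first equation in \eqref{7'}, if this identity on the transformation tensors holds, then for all $x, y$
\[
\n^P_x y = \n_x y + Q^P(x,y) = \n_x y + \tfrac{1}{2}\bigl(Q^C(x,y)+Q(x,y)\bigr) = \tfrac{1}{2}\left(\n^C_x y + \n'_x y\right),
\]
which is the claim. So the work reduces to verifying the tensor identity pointwise.

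Next, I would substitute the explicit formulas \eqref{27}, \eqref{28'}, \eqref{28''} into the combination $Q^C(x,y,z) + Q(x,y,z) - 2Q^P(x,y,z)$ and collect like terms. After cancellation, this becomes a linear combination of the $F$-evaluations $F(y,Px,z)$, $F(Py,x,z)$, $F(x,Py,z)$ and $F(Px,y,z)$, and the goal is to show that the resulting sum (proportional to)
\[
S(x,y,z) := F(y,Px,z) + F(Py,x,z) + F(x,Py,z) + F(Px,y,z)
\]
vanishes identically on a $\W_3$-manifold.

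To kill $S$, the idea is to apply the cyclic identity \eqref{4} to the two triples $\{x, Py, z\}$ and $\{Px, y, z\}$, add the two resulting relations, and use the symmetry $F(\cdot,\cdot,\cdot) = F(\cdot,\cdot,\cdot)$ in the last two slots from \eqref{3} to recognise four of the six summands as $S(x,y,z)$. The two remaining summands are of the form $F(z,x,Py)$ and $F(z,Px,y)$, and from the identity $F(x,y,Pz) = -F(x,Py,z)$ in \eqref{3} (used with $P^2 = I$) one sees they are negatives of each other, so they cancel. This yields $S = 0$, completing the proof.

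The main obstacle is purely bookkeeping: keeping track of which of the two symmetries in \eqref{3} (the symmetry in the last two arguments, and the $P$-anti-commutation) is invoked where, and choosing the right pair of cyclic relations so that after adding them only $S$ and a vanishing remainder survive. There is no conceptual difficulty beyond this; once the reduction $Q^C + Q - 2Q^P \propto S$ is done, \eqref{3} and \eqref{4} suffice.
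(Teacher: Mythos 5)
Your proposal is correct and takes essentially the same route as the paper: the paper's own argument is precisely to derive the tensor identity $Q^P=\frac{1}{2}\left(Q^C+Q\right)$ from \eqref{27}, \eqref{28'}, \eqref{28''} together with the $F$-symmetries \eqref{3} and the $\W_3$-condition \eqref{4}, and then pass to the connections via \eqref{7'}. Your write-up merely fills in the bookkeeping the paper leaves implicit, and it checks out: the combination $Q^C+Q-2Q^P$ reduces to $\frac{1}{4}S(x,y,z)$, and adding the two cyclic relations for $(x,Py,z)$ and $(Px,y,z)$ leaves the remainder $F(z,x,Py)+F(z,Px,y)$, which vanishes by $F(z,x,Py)=-F(z,Px,y)$ from \eqref{3}.
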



\section{RPT-Connection with Riemannian $P$-Tensor of Curvature}\label{sec-Ptensor}

In the present section we discuss the case when the curvature
tensor $R'$ of the RPT-connection $\n'$ is a Riemannian
$P$-tensor, \ie $R'$ has properties \eqref{9}--\eqref{11}.

Since the torsion $T$ of $\n'$ and the quantity $\sigma^T$ from
\eqref{23''} are forms, then by virtue of \eqref{23'} it follows
that $R'$ satisfies \eqref{9}. According to $\n'P=0$, property
\eqref{11} is valid, too. Then, $R'$ is a Riemannian $P$-tensor if
and only if \eqref{10} holds for $R'$.

Bearing in mind that \eqref{10} is satisfied for $R$, moreover $T$
and $\sigma^T$ are forms, then the following property is valid for
a metric connection $\n'$ with totally skew-symmetric torsion $T$
on a Riemannian manifold $(M,g)$:
\begin{equation}\label{29'}
    \mathop{\s} \limits_{x,y,z} R'(x,y,z,w)=
    \mathop{\s} \limits_{x,y,z}\left(\n'_x T\right)(y,z,w)+\sigma^T(x,y,z,w).
\end{equation}
Then, condition \eqref{10} for $R'$ on a Riemannian almost product
$\W_3$-manifold $(M,P,g)$ holds if and only if
\begin{equation}\label{30'}
    \mathop{\s} \limits_{x,y,z}\left(\n'_x T\right)(y,z,w)+\sigma^T(x,y,z,w)=0.
\end{equation}

\begin{theorem}\label{thm-4.1}
The RPT-connection $\n'$ on a Riemannian almost product
$\W_3$-mani\-fold $(M,P,g)$ has a Riemannian $P$-tensor of
curvature $R'$ if and only if
\begin{equation}\label{31'}
        R(x,y,z,w)=R'(x,y,z,w)-\frac{1}{4}g\left(T(x,y),T(z,w)\right)
        +\frac{1}{12}\sigma^T(x,y,z,w).
\end{equation}
\end{theorem}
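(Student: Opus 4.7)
The plan is to derive the equivalence by extracting a pointwise identity from the difference of \eqref{23'} and \eqref{31'}. As observed in the paragraph preceding the theorem, property \eqref{9} holds for $R'$ automatically, because the non-$R'$ terms of \eqref{23'} are skew in $(x,y)$ and in $(z,w)$; property \eqref{11} holds because $\n'P=0$. Hence $R'$ is a Riemannian $P$-tensor precisely when \eqref{10} holds for $R'$, and by \eqref{29'} this is equivalent to \eqref{30'}. The theorem thus reduces to the equivalence between \eqref{30'} and \eqref{31'}.

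Subtracting \eqref{31'} from \eqref{23'}, and using $-\frac{1}{4}-\frac{1}{12}=-\frac{1}{3}$, one sees that \eqref{31'} is equivalent to the pointwise identity
\[
(\n'_x T)(y,z,w)-(\n'_y T)(x,z,w)=-\frac{2}{3}\sigma^T(x,y,z,w). \quad (\ast)
\]
The implication $(\ast)\Rightarrow \eqref{30'}$ is an immediate cyclic summation. Since $\n' T$ is skew in its last three arguments (because $\n'g=0$ and $T$ is a $3$-form), the cyclic sum of the left-hand side of $(\ast)$ equals $2\mathop{\s}_{x,y,z}(\n'_x T)(y,z,w)$, while the cyclic sum of the right-hand side equals $-2\sigma^T(x,y,z,w)$ since $\sigma^T$ is invariant under cyclic permutation of its first three arguments. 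This yields \eqref{30'}.

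For the converse $\eqref{30'}\Rightarrow (\ast)$, I would exploit the pair-symmetry $R(x,y,z,w)=R(z,w,x,y)$ of the Levi-Civita curvature together with the analogous pair-symmetry for $R'$, which follows from \eqref{9} and \eqref{10} for $R'$ by the standard argument. Equating \eqref{23'} at $(x,y,z,w)$ and at $(z,w,x,y)$ gives a second relation between the $(\n' T)$-terms under the swap of pairs; combined with \eqref{30'}, the explicit form \eqref{21} of $T$, and the identities \eqref{3}, \eqref{4}, \eqref{26'}, this should force the antisymmetric combination on the left of $(\ast)$ to equal $-\frac{2}{3}\sigma^T$ pointwise. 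The main obstacle will be precisely this converse step: upgrading the cyclic-sum identity \eqref{30'} to the pointwise identity $(\ast)$ requires the full algebraic apparatus of the $\W_3$ structure and careful bookkeeping with the available identities for $F$ and $T$.
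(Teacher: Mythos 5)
Your reduction of the theorem to the equivalence of \eqref{30'} with the pointwise identity $(\ast)$ is correct, and your forward direction (cyclic summation of $(\ast)$ over $x,y,z$) is sound; it is essentially the paper's \emph{vice versa} argument, which cyclically sums \eqref{31'} directly. The genuine gap is the converse, $\eqref{30'}\Rightarrow(\ast)$, which you yourself flag as the main obstacle and leave unproved. Your proposed mechanism does not close it: equating \eqref{23'} at $(x,y,z,w)$ and at $(z,w,x,y)$ (pair symmetry of $R'$ does follow legitimately from \eqref{9} and \eqref{10} here) yields only the pair symmetry $B(x,y,z,w)=B(z,w,x,y)$ for $B(x,y,z,w)=\left(\n'_x T\right)(y,z,w)-\left(\n'_y T\right)(x,z,w)$. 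But a tensor that is skew in both pairs, pair-symmetric, and has cyclic sum $-2\sigma^T$ is determined only up to an arbitrary tensor satisfying \eqref{9}, \eqref{10} and pair symmetry (an algebraic curvature tensor), since the cyclic sum of such a tensor vanishes; nothing in the toolbox you list removes this ambiguity. Moreover, invoking \eqref{21}, \eqref{3}, \eqref{4}, \eqref{26'} and ``the full algebraic apparatus of the $\W_3$ structure'' is a red herring: the implication is pure multilinear algebra, valid for any metric connection with totally skew-symmetric torsion.

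What is actually needed --- and what the paper does --- is a re-symmetrization of \eqref{30'} itself. Using only that $\n'T$ is skew in its last three arguments and that $\sigma^T$ is a $4$-form, sum \eqref{30'} cyclically over $y,z,w$ to obtain \eqref{32'}; subtracting twice \eqref{30'} gives
\[
\left(\n'_x T\right)(y,z,w)-2\left(\n'_w T\right)(x,y,z)+\sigma^T(x,y,z,w)=0,
\]
and the substitution $x\leftrightarrow w$ produces a second equation; solving the resulting $2\times 2$ linear system yields the pointwise identity \eqref{33'}, $\left(\n'_x T\right)(y,z,w)=-\frac{1}{3}\sigma^T(x,y,z,w)$, which is strictly stronger than your $(\ast)$ and gives \eqref{31'} upon substitution into \eqref{23'}. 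For completeness: your route could in principle be salvaged, because $B$ is the antisymmetrization in its first two slots of a tensor skew in its last three, so its decomposition contains no algebraic-curvature component; pair symmetry then forces $B$ to be a $4$-form, and the cyclic-sum constraint pins it to $-\frac{2}{3}\sigma^T$. But this Schur-type input is exactly the nontrivial step your sketch does not supply, so as written the converse direction remains a gap.
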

\begin{proof}
Let the RPT-connection $\n'$ on $(M,P,g)$ have a Riemannian
$P$-tensor of curvature $R'$, \ie \eqref{30'} is valid. Since $T$
and $\sigma^T$ are forms, then the cyclic sum of \eqref{30'} by
$y$, $z$, $w$ implies
\begin{eqnarray}
    3\left(\n'_x T\right)(y,z,w)&+&2\left(\n'_z T\right)(x,y,w)+2\left(\n'_y
    T\right)(z,x,w)\nonumber\\
    &-&2\left(\n'_w T\right)(x,y,z)+3\sigma^T(x,y,z,w)=0.\label{32'}
\end{eqnarray}

From \eqref{30'} and \eqref{32'} it follows immediately
\[
    \left(\n'_x T\right)(y,z,w)-2\left(\n'_w
    T\right)(x,y,z)+\sigma^T(x,y,z,w)=0,
\]
where by virtue of the substitution $x\leftrightarrow w$ we have
\[
    \left(\n'_w T\right)(x,y,z)-2\left(\n'_x
    T\right)(y,z,w)-\sigma^T(x,y,z,w)=0.
\]
The latter two equalities imply
\begin{equation}\label{33'}
    \left(\n'_x T\right)(y,z,w)=-\frac{1}{3}\sigma^T(x,y,z,w)
\end{equation}
and then, according to \eqref{23'}, we obtain \eqref{31'}.

\emph{Vice versa}, let \eqref{31'} be satisfied. Then, bearing in
mind the equalities $\mathop{\s} \limits_{x,y,z} R(x,y,z,w)=0$ and
$\mathop{\s} \limits_{x,y,z} \sigma^T(x,y,z,w)=3\sigma^T(x,y,z,w)$
for the 4-form $\sigma^T$, we establish that the cyclic sum of
\eqref{31'} by $x$, $y$, $z$ yields $\mathop{\s} \limits_{x,y,z}
R'(x,y,z,w)=0$. Therefore, $R'$ is a Riemannian $P$-tensor.
\end{proof}

Since condition \eqref{33'} is fulfilled for a Riemannian
$P$-tensor $R'$, then we have $g^{ij}\left(\n'_{e_i}
T\right)(y,z,e_j)=0$. Hence, because of \eqref{23'''}, we obtain
\begin{corollary}\label{cor-4.2}
If the RPT-connection $\n'$ on a Riemannian almost product
$\W_3$-mani\-fold $(M,P,g)$ has a Riemannian $P$-tensor of
curvature $R'$, then the Ricci tensors $\rho$ and $\rho'$ for $\n$
and $\n'$ are related via the formula
\[
        \rho(y,z)=\rho'(y,z)-\frac{1}{4}g^{ij}g\left(T(e_i,y),T(z,e_j)\right).
\] \hfill$\Box$
\end{corollary}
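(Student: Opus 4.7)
The plan is to start from the general relation \eqref{23'''}, which expresses $\rho-\rho'$ as the sum of a ``divergence'' term $-\tfrac12 g^{ij}(\n'_{e_i}T)(y,z,e_j)$ and the desired quadratic torsion term. So the whole task reduces to showing that, under the extra hypothesis that $R'$ is a Riemannian $P$-tensor, the divergence term vanishes. The author has already essentially supplied the key input: by \thmref{thm-4.1} (specifically equation \eqref{33'} in its proof), the hypothesis forces
\[
\left(\n'_x T\right)(y,z,w)=-\frac{1}{3}\sigma^T(x,y,z,w).
\]
Thus the divergence is proportional to $g^{ij}\sigma^T(e_i,y,z,e_j)$, and the job is to check that this contraction is zero.

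To carry out that check, I would expand $\sigma^T(e_i,y,z,e_j)$ via its defining cyclic sum \eqref{23''} into the three terms
\[
g(T(e_i,y),T(z,e_j))+g(T(y,z),T(e_i,e_j))+g(T(z,e_i),T(y,e_j)),
\]
and then contract each with $g^{ij}$. The middle term is immediate: $g^{ij}T(e_i,e_j)=0$ because $T$ is skew in its first two arguments while $g^{ij}$ is symmetric, so this contribution drops. For the third term I would relabel the summation indices $i\leftrightarrow j$ (allowed since $g^{ij}=g^{ji}$) and use the skew-symmetry $T(z,e_i)=-T(e_i,z)$ and $T(y,e_j)=-T(e_j,y)$ to rewrite it as $-g^{ij}g(T(e_i,y),T(z,e_j))$, which exactly cancels the first term. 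Hence $g^{ij}\sigma^T(e_i,y,z,e_j)=0$, and therefore $g^{ij}(\n'_{e_i}T)(y,z,e_j)=0$, as asserted in the sentence preceding the corollary.

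Feeding this into \eqref{23'''} kills the middle summand and leaves exactly
\[
\rho(y,z)=\rho'(y,z)-\frac{1}{4}g^{ij}g\left(T(e_i,y),T(z,e_j)\right),
\]
which is the claimed formula.

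There is no real obstacle here: the only substantive ingredient is \eqref{33'}, which is already available from \thmref{thm-4.1}, and the remainder is a short index-cancellation using nothing beyond the total skew-symmetry of $T$ and the symmetry of $g^{ij}$. The one place where I would be careful in writing is the relabelling step for the third cyclic term, since a sign error there would destroy the cancellation; otherwise the proof is essentially a one-line substitution.
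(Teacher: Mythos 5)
Your proof is correct and follows the paper's own argument: both start from \eqref{23'''} and use \eqref{33'} from \thmref{thm-4.1} to reduce the claim to the vanishing of the contraction $g^{ij}\sigma^T(e_i,y,z,e_j)$. The only difference is cosmetic --- the paper obtains this vanishing at once from the fact that $\sigma^T$ is a 4-form (hence skew in its first and last arguments, which are contracted with the symmetric $g^{ij}$), whereas you verify the same cancellation by expanding the cyclic sum \eqref{23''} term by term.
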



\section{RPT-Connection with Riemannian $P$-Tensor of Curvature and Parallel Torsion}\label{sec-parT}

Let $\n'$ be a metric connection with totally skew-symmetric
torsion $T$ on a Riemannian manifold $(M,g)$. If $T$ is parallel,
\ie $\n'T=0$, then \eqref{23'} implies
\begin{equation}\label{35'}
        R(x,y,z,w)=R'(x,y,z,w)-\frac{1}{4}g\left(T(x,y),T(z,w)\right)-\frac{1}{4}\sigma^T(x,y,z,w).
\end{equation}
\emph{Vice versa}, if \eqref{35'} is valid, because of \eqref{23'}
and the totally skew-symmetry of $T$, then $\n'T=0$ holds.
Therefore, relation \eqref{35'} is a necessary and sufficient
condition for a parallel torsion of $\n'$.

Because of $\sigma^T(x,y,z,w)=\sigma^T(z,w,x,y)$ and
$R(x,y,z,w)=R(z,w,x,y)$, relation \eqref{35'} yields
\begin{equation}\label{36'}
        R'(x,y,z,w)=R'(z,w,x,y).
\end{equation}

Besides, if $\n'T=0$ then \eqref{29'} implies
\begin{equation}\label{37'}
    \mathop{\s} \limits_{x,y,z} R'(x,y,z,w)=\sigma^T(x,y,z,w).
\end{equation}

Now, let $\n'$ be the RPT-connection on a Riemannian almost
product $\W_3$-manifold $(M,P,g)$. Let us suppose that the torsion
of $\n'$ is parallel. Then \eqref{36'} is valid. Moreover,
\eqref{11} holds because of $\n'P=0$. Therefore, \eqref{36'} is
equivalent to
\begin{equation}\label{38'}
        R'(Px,Py,Pz,Pw)=R'(x,y,z,w).
\end{equation}
Equalities \eqref{37'} and \eqref{38'} imply
\begin{equation*}\label{39'}
       \sigma^T(Px,Py,Pz,Pw)=\sigma^T(x,y,z,w).
\end{equation*}


In addition to the above, let us suppose that the curvature tensor
$R'$ of the RPT-connection $\n'$ is a Riemannian $P$-tensor. Then
\eqref{33'} is valid and because of $\n'T=0$ the equality
$\sigma^T=0$ holds. Hence and \eqref{35'} we obtain the following
\begin{theorem}\label{thm-5.1}
If the RPT-connection $\n'$ on a Riemannian almost product
$\W_3$-manifold $(M,P,g)$ has a parallel torsion $T$ and a
Riemannian $P$-tensor of curvature $R'$ then
\[
        R(x,y,z,w)=R'(x,y,z,w)-\frac{1}{4}g\left(T(x,y),T(z,w)\right).
\] \hfill$\Box$
\end{theorem}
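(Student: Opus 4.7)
The plan is to combine the two hypotheses and show that the $\sigma^T$-term appearing in the general formula \eqref{35'} drops out. First, since $\n' T = 0$, equation \eqref{23'} specializes immediately to
\[
R(x,y,z,w) = R'(x,y,z,w) - \frac{1}{4}g(T(x,y),T(z,w)) - \frac{1}{4}\sigma^T(x,y,z,w),
\]
which is exactly relation \eqref{35'} recalled at the start of this section. Comparing with the statement of the theorem, the entire task reduces to proving that $\sigma^T$ vanishes identically.

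To establish $\sigma^T = 0$, I would invoke the second hypothesis, that $R'$ is a Riemannian $P$-tensor. The derivation carried out in the first half of the proof of \thmref{thm-4.1} shows that this property forces the identity \eqref{33'},
\[
\left(\n'_x T\right)(y,z,w) = -\frac{1}{3}\sigma^T(x,y,z,w).
\]
Substituting the parallelism assumption $\n' T = 0$ into the left-hand side immediately yields $\sigma^T \equiv 0$.

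Plugging $\sigma^T = 0$ back into the specialized \eqref{35'} produces the claimed formula. I do not anticipate any substantive obstacle: the machinery of Sections~\ref{sec-Ptensor} and~\ref{sec-parT} has been arranged so that the two hypotheses interact cleanly. The conceptually noteworthy point worth emphasizing is that the Riemannian $P$-tensor condition on $R'$ plays a \emph{linking} role, converting $\n' T$ into $\sigma^T$ via \eqref{33'} and thereby allowing the parallelism of $T$ to eliminate $\sigma^T$ — a cancellation that follows from neither hypothesis in isolation.
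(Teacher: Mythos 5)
Your proposal is correct and follows the paper's own argument exactly: the paper likewise specializes \eqref{23'} to \eqref{35'} via $\n'T=0$, invokes \eqref{33'} (established in the proof of \thmref{thm-4.1} from the Riemannian $P$-tensor hypothesis) to conclude $\sigma^T=0$, and substitutes back. Your closing remark about the linking role of \eqref{33'} is a fair gloss on the same mechanism; nothing is missing.
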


\section{An Example}\label{sec-exa}

In the present section we study the RPT-connection $\n'$ on an
example of a Riemannian almost product $\W_3$-manifold constructed
in \cite{MekDobr} by a Lie group.

\subsection{The Riemannian almost product $\W_3$-manifold $(G,P,g)$}

We will describe briefly the example of a Riemannian almost
product $\W_3$-manifold in \cite{MekDobr}.

Let $G$ be a 4-dimensional real connected Lie group and $\g$ be
its Lie algebra with a basis $\{X_i\}$. By an introduction of a
structure $P$ and a left invariant metric $g$ as follows
\begin{equation*}\label{45}
    PX_1=X_3,\quad PX_2=X_4,\quad PX_3=X_{1},\quad PX_4=X_{2},
\end{equation*}
\begin{equation}\label{46}
    g(X_i,X_j)=
\begin{cases}
\begin{array}{rl}
1, \quad & i=j;\\
0, \quad & i\neq j,
\end{array}
\end{cases}
\end{equation}
the manifold   $(G,P,g)$ becomes a Riemannian almost product
manifold with $\tr{P}=0$.

The setting of the condition for the associated metric $\tilde{g}$
to be a Killing metric \cite{Hel}, \ie
\begin{equation}\label{48}
    g\left([X_i,X_j],PX_k\right)+g\left([X_i,X_k],PX_j\right)=0,
\end{equation}
specializes $(G,P,g)$ as a Riemannian almost product
$\W_3$-manifold and the Lie algebra $\g$ is determined by the
following equalities:
\begin{equation}\label{50}
\begin{array}{ll}
    [X_1,X_2]= \lm_1 X_1 +\lm_2 X_2,\qquad & [X_1,X_3]= \lm_3
    X_2-\lm_1 X_4,\\[4pt]
    [X_1,X_4]= -\lm_3 X_1 -\lm_2 X_4,\qquad & [X_2,X_3]= \lm_4
    X_2+\lm_1 X_3,\\[4pt]
    [X_2,X_4]= -\lm_4 X_1 +\lm_2 X_3,\qquad & [X_3,X_4]= \lm_3
    X_3
    +\lm_4 X_4,
\end{array}
\end{equation}
where $\lm_1$, $\lm_2$, $\lm_3$, $\lm_4\in\R$.

Further, $(G,P,g)$ will stand for the Riemannian almost product
$\W_3$-manifold determined by  conditions \eqref{50}.

In \cite{MekDobr} there are computed the components with respect
to the basis $\{X_i\}$ of the tensor $F$, the Levi-Civita
connection $\n$, the covariant derivative $\n P$, the curvature
tensor $R$ of $\n$.

There are computed also the scalar curvature $\tau$ for $\n$ and
the square norm of $\n P$:
\begin{equation}\label{*}
    \tau=-\frac{5}{2}\left(\lm_1^2+\lm_2^2+\lm_3^2+\lm_4^2\right),\qquad
    \nP=4\left(\lm_1^2+\lm_2^2+\lm_3^2+\lm_4^2\right).
\end{equation}

\subsection{The RPT-connection $\n'$ on $(G,P,g)$}

Further in our considerations we exclude the trivial case
$\lm_1=\lm_2=\lm_3=\lm_4=0$, \ie the case of a Riemannian
$P$-manifold $(G,P,g)$.

From \propref{prop-3.5} and \eqref{*} we obtain the following
\begin{proposition}\label{prop-6.3}
The manifold $(G,P,g)$ has negative scalar curvatures with respect
to the Levi-Civita connection $\n$ and the RPT-connection $\n'$,
as
\[
\tau'=-4\left(\lm_1^2+\lm_2^2+\lm_3^2+\lm_4^2\right).
\]
\hfill$\Box$
\end{proposition}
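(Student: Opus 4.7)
The plan is to derive $\tau'$ purely as a consequence of \propref{prop-3.5} combined with the already-computed values in \eqref{*}, so the proposition reduces to a one-line substitution; the negativity of both scalar curvatures then follows from the exclusion of the trivial case.

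First I would rewrite the relation of \propref{prop-3.5} in the form
\[
\tau' = \tau - \frac{3}{8}\nP,
\]
which is legitimate on $(G,P,g)$ because it is a Riemannian almost product $\W_3$-manifold and hence admits the unique RPT-connection $\n'$ guaranteed by \thmref{thm-3.4}. Next I would substitute the two expressions recorded in \eqref{*}, namely
\[
\tau = -\frac{5}{2}\left(\lm_1^2+\lm_2^2+\lm_3^2+\lm_4^2\right), \qquad
\nP = 4\left(\lm_1^2+\lm_2^2+\lm_3^2+\lm_4^2\right),
\]
and collect terms: the two coefficients $-\tfrac{5}{2}$ and $-\tfrac{3}{8}\cdot 4 = -\tfrac{3}{2}$ sum to $-4$, which yields exactly the claimed formula for $\tau'$.

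Finally, I would note that the exclusion of the trivial case $\lm_1=\lm_2=\lm_3=\lm_4=0$ (stated just before the proposition, precisely to avoid the Riemannian $P$-manifold situation) forces $\lm_1^2+\lm_2^2+\lm_3^2+\lm_4^2>0$, from which both $\tau<0$ and $\tau'<0$ follow immediately. There is no real obstacle here; the only point requiring any care is to make sure the sign in the rearrangement of \propref{prop-3.5} is handled correctly, since a sign error would spoil both the numerical coefficient and the negativity assertion.
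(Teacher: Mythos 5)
Your proposal is correct and coincides with the paper's own argument: \propref{prop-3.5} gives $\tau'=\tau-\frac{3}{8}\nP$, and substituting the values in \eqref{*} yields $\left(-\frac{5}{2}-\frac{3}{2}\right)\left(\lm_1^2+\lm_2^2+\lm_3^2+\lm_4^2\right)=-4\left(\lm_1^2+\lm_2^2+\lm_3^2+\lm_4^2\right)$, exactly as the paper indicates by deriving the proposition ``from \propref{prop-3.5} and \eqref{*}.'' Your additional remark that negativity follows from the exclusion of the trivial case $\lm_1=\lm_2=\lm_3=\lm_4=0$ matches the paper's setup stated just before the proposition.
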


The following proposition is useful for computation of the
components of the RPT-connection and its torsion.
\begin{proposition}\label{prop-6.4}
The RPT-connection $\n'$ on $(G,P,g)$ and its torsion $T$ satisfy
the equalities:
\begin{gather}
    T(X_i,X_j)=-[PX_i,PX_j]\label{53'}\\
    \n'_{X_i} X_j=[X_i,X_j]+P[X_i,PX_j].\label{56}
\end{gather}
\end{proposition}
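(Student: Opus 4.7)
The plan is to prove both equalities by defining a candidate connection via the right-hand side of \eqref{56}, verifying it is natural and has totally skew-symmetric torsion, and then invoking the uniqueness established in \thmref{thm-3.4}. Throughout, everything is evaluated on left-invariant vector fields, so each derivative $X_i g(X_j,X_k)$ vanishes.

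Set $\bar{\n}_{X_i}X_j := [X_i,X_j] + P[X_i,PX_j]$. Since $P^2=\mathrm{id}$, a direct computation gives
\[
\bar{\n}_{X_i}(PX_j) = [X_i,PX_j] + P[X_i,X_j] = P\,\bar{\n}_{X_i}X_j,
\]
so $\bar{\n}P=0$. The condition $\bar{\n}g=0$ reduces to $g(\bar{\n}_{X_i}X_j,X_k)+g(X_j,\bar{\n}_{X_i}X_k)=0$, i.e.
\[
g([X_i,X_j],X_k) + g([X_i,X_k],X_j) + g([X_i,PX_j],PX_k) + g([X_i,PX_k],PX_j) = 0.
\]
Writing $X_k=P^2X_k$ in the first term, the Killing condition \eqref{48} applied with $(X,Y,Z)=(X_i,X_j,PX_k)$ pairs it with the fourth to $0$; the analogous application with $(X,Y,Z)=(X_i,X_k,PX_j)$ pairs the second and the third. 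Hence $\bar{\n}$ is a metric connection preserving $P$.

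Computing the torsion,
\[
\bar T(X_i,X_j) = \bar{\n}_{X_i}X_j - \bar{\n}_{X_j}X_i - [X_i,X_j] = [X_i,X_j] + P[X_i,PX_j] + P[PX_i,X_j],
\]
so \eqref{53'} is equivalent to the algebraic identity
\[
[X_i,X_j] + [PX_i,PX_j] + P\bigl([X_i,PX_j]+[PX_i,X_j]\bigr) = 0, \qquad (\ast)
\]
which I would check directly from the structure constants \eqref{50}. Granted $(\ast)$, one has $\bar T(X_i,X_j,X_k)=-g([PX_i,PX_j],X_k)=-\tilde{g}([PX_i,PX_j],PX_k)$; and \eqref{48} asserts precisely that $(X,Y,Z)\mapsto \tilde{g}([X,Y],Z)$ is totally skew on $\g$, so $\bar T$ is a $3$-form. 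Thus $\bar{\n}$ is a natural connection with totally skew-symmetric torsion on the $\W_3$-manifold $(G,P,g)$, and by the uniqueness in \thmref{thm-3.4} it must coincide with $\n'$, yielding both \eqref{56} and \eqref{53'}.

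The main obstacle is identity $(\ast)$: it is \emph{not} a formal consequence of \eqref{48} alone but reflects the additional grading property $[V_\pm,V_\pm]\subseteq V_\mp$ for the eigenspace decomposition $\g=V_{+}\oplus V_{-}$ of $P$. In the family \eqref{50} this has to be seen pair-by-pair: in each of the six cases $(i,j)\in\{(1,2),(1,3),(1,4),(2,3),(2,4),(3,4)\}$ the $\lm_1,\dots,\lm_4$-terms arrange into matching pairs that annihilate, and it is exactly this check which the proposition is intended to shortcut for the subsequent computations in the example.
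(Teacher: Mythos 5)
Your proposal is correct, and it takes a genuinely different route from the paper's. The paper works top-down: it computes the Levi-Civita connection from the Koszul formula \eqref{47} simplified by the Killing condition \eqref{48}, obtaining \eqref{51}, then derives $\n P$ in \eqref{52}, substitutes into the explicit expression \eqref{28} for the tensor $Q$ of the (already unique) RPT-connection to get the torsion \eqref{53}, and finally simplifies using the identity \eqref{55} --- which is exactly your $(\ast)$, and which the paper likewise just asserts ``according to \eqref{50}''. You instead go bottom-up: define $\bar{\n}$ by the right side of \eqref{56}, verify $\bar{\n}P=\bar{\n}g=0$ (your pairing of terms via \eqref{48} is correct), show its torsion equals $-[PX_i,PX_j]$ via $(\ast)$ and is totally skew because $(x,y,z)\mapsto\tilde{g}([x,y],z)$ is a $3$-form by \eqref{48}, and then invoke the uniqueness in \thmref{thm-3.4}. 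Note that the uniqueness statement is for $(M,P,g)\notin\W_0$; this is consistent, since the paper excludes the trivial case $\lm_1=\lm_2=\lm_3=\lm_4=0$ just before the proposition (and in the excluded abelian case both formulas hold trivially). What each route buys: the paper's computation also produces \eqref{51} and \eqref{52}, which it needs anyway, whereas your argument bypasses the Levi-Civita connection and formula \eqref{28} entirely and isolates the single nontrivial algebraic input. One refinement closes your ``main obstacle'' without any case-by-case check: contrary to your closing remark, $(\ast)$ \emph{is} a formal consequence of \eqref{48} in this example. Since $\dim G=4$ and $\tr P=0$, the eigenspaces $V_{\pm}$ of $P$ are $2$-dimensional and $g$-orthogonal to each other; restricting the $3$-form $\tilde{g}([x,y],z)$ to $V_{\pm}$ it must vanish (an alternating trilinear form on a $2$-dimensional space is zero), and since $\tilde{g}([x,y],z)=\pm g([x,y],z)$ for $z\in V_{\pm}$ this gives $[V_{\pm},V_{\pm}]\subseteq V_{\mp}$; together with the mixed case, which holds identically, this is precisely $(\ast)=$ \eqref{55}, so the pair-by-pair verification against \eqref{50} is not actually needed.
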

\begin{proof}
Because of \eqref{46} the following equality is valid
\begin{equation}\label{47}
    2g\left(\n_{X_i} X_j,X_k
    \right)=g\left([X_i,X_j],X_k\right)+g\left([X_k,X_i],X_j\right)+g\left([X_k,X_j],X_i\right).
\end{equation}

Using \eqref{47} and \eqref{48}, we obtain
\[
    2g\left(\n_{X_i} X_j,X_k
    \right)=g\left([X_i,X_j],X_k\right)+g\left(P[X_i,PX_j],X_k\right)-g\left(P[PX_i,X_j],X_k\right).
\]
Then we have
\begin{equation}\label{51}
   2 \n_{X_i} X_j
    =[X_i,X_j]+P[X_i,PX_j]-P[PX_i,X_j].
\end{equation}
The equality \eqref{51} implies
\[
\begin{array}{l}
    2\n_{X_i} PX_j=[X_i,PX_j]+P[X_i,X_j]-P[PX_i,PX_j],\\
    2P\n_{X_i} X_j=P[X_i,X_j]+[X_i,PX_j]-[PX_i,X_j].\\
\end{array}
\]
We subtract the last two equalities, apply the formula for the
covariant derivation of $P$ and obtain
\begin{equation}\label{52}
       2 \left(\n_{X_i} P\right)X_j=-P[PX_i,PX_j]+[PX_i,X_j].
\end{equation}

Let $\n'$ be the RPT-connection on $(G,P,g)$. By virtue of
\eqref{23}, \eqref{28} and \eqref{52}, we have
\begin{eqnarray}
        4T(X_i,X_j,X_k)=-g\bigl(3[PX_i,PX_j]&-&P[X_i,PX_j] \nonumber\\
        &-&P[PX_i,X_j]-[X_i,X_j],X_k\bigr).\label{53}
\end{eqnarray}

According to \eqref{50}, we have
\begin{equation}\label{55}
    [PX_i,PX_j]+P[PX_i,X_j]+P[X_i,PX_j]+[X_i,X_j]=0.
\end{equation}
Bearing in mind \eqref{53} and \eqref{55}, it follows \eqref{53'}.

From \eqref{7'}, \eqref{23}, \eqref{51}, \eqref{55} and
\eqref{53'}, we obtain \eqref{56}.
\end{proof}

Using \eqref{53'} and \eqref{50}, we obtain the following non-zero
components $T_{ijk}=T(X_i,X_j,\allowbreak X_k)$ of $T$:
\begin{equation}\label{54}
    T_{134}=-\lm_1,\quad T_{234}=-\lm_2,\quad T_{123}=-\lm_3,\quad T_{124}=-\lm_4.
\end{equation}
The rest of the non-zero components are obtained from \eqref{54}
by the totally skew-symmetry of $T$.

Combining \eqref{50} and  \eqref{56}, we get the components
$\n'_{X_i} X_j$ of the RPT-connection  $\n'$:
\begin{equation}\label{57}
    \begin{array}{l}
        \n'_{X_1} X_1=-\n'_{X_3} X_3= -\lm_1 X_2 +\lm_3 X_4,\\
        \n'_{X_2} X_2=-\n'_{X_4} X_4=  \lm_2 X_1 -\lm_4 X_3,\\
        \n'_{X_1} X_2=-\n'_{X_3} X_4=  \lm_1 X_1 -\lm_3 X_3,\\
        \n'_{X_1} X_3=-\n'_{X_3} X_1= \lm_3 X_2 -\lm_1 X_4,\\
        \n'_{X_1} X_4=-\n'_{X_3} X_2= -\lm_3 X_1 +\lm_1 X_3,\\
        \n'_{X_2} X_1=-\n'_{X_4} X_3= -\lm_2 X_2 +\lm_4 X_4,\\
        \n'_{X_2} X_3=-\n'_{X_4} X_1= \lm_4 X_2 -\lm_2 X_4,\\
        \n'_{X_2} X_4=-\n'_{X_4} X_2= -\lm_4 X_1 +\lm_2 X_3.\\
\end{array}
\end{equation}

By virtue of \eqref{50} and \eqref{57}, we get the following
non-zero components $R'_{ijks}=R'(X_i,X_j,\allowbreak X_k,X_s)$ of
the tensor $R'$:
\begin{equation}\label{R'ijkl}
\begin{array}{c}
\begin{array}{rr}
R'_{1212}=R'_{1234}=\lm_1^2+\lm_2^2,&\qquad
R'_{1432}=R'_{1414}=\lm_2^2+\lm_3^2,
\\
R'_{3412}=R'_{3434}=\lm_3^2+\lm_4^2,&\qquad
R'_{2341}=R'_{2323}=\lm_1^2+\lm_4^2,
\end{array}
\\
\begin{array}{rr}
R'_{1314}=R'_{1332}=R'_{2441}=R'_{2423}=\lm_1\lm_2-\lm_3\lm_4,\\
R'_{1321}=R'_{1343}=R'_{2421}=R'_{2443}=\lm_1\lm_4-\lm_2\lm_3,\\
\end{array}
\\
\begin{array}{rr}
R'_{1241}=R'_{1223}=R'_{2312}=R'_{2334}=R'_{1421}
\phantom{=\lm_1\lm_3+\lm_2\lm_4..}
\\
=R'_{1443}=R'_{3441}=R'_{3423} =\lm_1\lm_3+\lm_2\lm_4,
\end{array}
\end{array}
\end{equation}
The rest of the non-zero components are obtained from
\eqref{R'ijkl} by the properties $R'_{ijks}=R'_{ksij}$ and
$R'_{ijks}=-R'_{jiks}=-R'_{ijsk}$.

Using \eqref{54} and \eqref{57}, we compute the following non-zero
components
$\left(\n'_iT\right)_{jks}=\left(\n'_{X_i}T\right)\allowbreak
(X_j,X_k,X_s)$ of $\n'T$:
\begin{equation}\label{NTijks}
    \begin{array}{c}
\left(\n'_1 T\right)_{132}=\left(\n'_2 T\right)_{142}
=\left(\n'_3 T\right)_{134}=\left(\n'_4 T\right)_{234}%
=\lm_1\lm_4-\lm_2\lm_3,\\
\left(\n'_1 T\right)_{143}=\left(\n'_2 T\right)_{234}
=\left(\n'_3 T\right)_{123}=\left(\n'_4 T\right)_{142}%
=\lm_1\lm_2-\lm_3\lm_4,\\
\left(\n'_1 T\right)_{234}=\left(\n'_3
T\right)_{142}=\lm_1^2-\lm_3^2,\qquad
\left(\n'_2 T\right)_{143}=\left(\n'_4
T\right)_{123}=\lm_2^2-\lm_4^2.
    \end{array}
\end{equation}
The rest of the non-zero components are obtained from
\eqref{NTijks} by the totally skew-symmetry of $T$.

The exterior derivative $\dd T$ of the totally skew-symmetric
 torsion $T$ for the connection $\n'$ on a Riemannian manifold
 $(M,g)$
is given by the following formula (see \eg \cite{IvPapa})
\[
\dd T(x,y,z,w) = \mathop{\s} \limits_{x,y,z}\left(\n'_x
T\right)(y,z,w)-\left(\n'_w T\right)(x,y,z)+2\sigma^T(x,y,z,w).
\]
Then, using \eqref{54},  \eqref{NTijks} and \eqref{23''}, we
compute that $\dd T(X_i,X_j,X_k,X_s)=0$ for all $i,j,k,s$.
Therefore, the manifold $(G,P,g)$ has a closed torsion 3-form $T$
for the RPT-connection $\n'$. In this case we say that $(G,P,g)$
has a \emph{strong} RPT-structure by analogy with the Hermitian
case. Therefore, we have the following
\begin{proposition}\label{prop-6.5}
The manifold $(G,P,g)$ has a strong RPT-structure. \hfill$\Box$
\end{proposition}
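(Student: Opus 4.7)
The plan is to verify directly that $\dd T(X_i,X_j,X_k,X_s)=0$ on the basis $\{X_i\}$, using the formula for $\dd T$ recalled just before the proposition together with the component tables already assembled in \eqref{54}, \eqref{NTijks} and the definition \eqref{23''}. Since $\dim G=4$, the space of alternating $4$-forms on $\g$ is one-dimensional, so by total skew-symmetry of $\dd T$ it suffices to evaluate the single value $\dd T(X_1,X_2,X_3,X_4)$.

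Substituting $(x,y,z,w)=(X_1,X_2,X_3,X_4)$ in the formula for $\dd T$, I would read the first two pieces
\[
\mathop{\s}\limits_{X_1,X_2,X_3} \left(\n'_{X_1}T\right)(X_2,X_3,X_4) - \left(\n'_{X_4}T\right)(X_1,X_2,X_3)
\]
off \eqref{NTijks}, using the total skew-symmetry of $T$ in its three arguments to translate any index pattern not literally present in the table (for instance $\left(\n'_{X_2}T\right)(X_3,X_1,X_4)=-(\n'_2 T)_{143}$ and $\left(\n'_{X_3}T\right)(X_1,X_2,X_4)=-(\n'_3 T)_{142}$).

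For the third piece I would expand $\sigma^T(X_1,X_2,X_3,X_4)$ via \eqref{23''} as a cyclic sum of three inner products $g(T(X_i,X_j),T(X_k,X_s))$. Using \eqref{54} and the orthonormality \eqref{46}, each vector $T(X_i,X_j)$ lies in the span of the two basis vectors complementary to $\{X_i,X_j\}$; hence in each of the three summands of $\sigma^T(X_1,X_2,X_3,X_4)$ the two factors $T(X_i,X_j)$ and $T(X_k,X_s)$ lie in orthogonal $2$-planes, so every summand and consequently $\sigma^T(X_1,X_2,X_3,X_4)$ itself vanishes.

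Assembling the three contributions, the terms of the form $\lm_1\lm_2-\lm_3\lm_4$ cancel between the cyclic sum and the $-(\n'_{X_4}T)(X_1,X_2,X_3)$ piece, while the quadratic pieces of the form $\lm_1^2-\lm_3^2$ cancel inside the cyclic sum itself; together with $\sigma^T(X_1,X_2,X_3,X_4)=0$ this yields $\dd T=0$, i.e.\ the RPT-structure is strong. The only real difficulty is bookkeeping --- keeping the correct signs when a component required in the sum is not literally present in \eqref{NTijks} and has to be recovered by skew-symmetry of $T$.
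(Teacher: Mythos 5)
Your overall strategy is exactly the paper's: the paper too proves this proposition by evaluating the displayed formula for $\dd T$ on the basis, using \eqref{54}, \eqref{NTijks} and \eqref{23''}. Your two reductions are correct and genuinely streamline the paper's brute-force check of all quadruples: since $\dd T$ is a $4$-form on a $4$-dimensional Lie algebra, the single value $\dd T(X_1,X_2,X_3,X_4)$ suffices; and $\sigma^T(X_1,X_2,X_3,X_4)=0$ for the structural reason you give --- $T$ being a $3$-form forces $T(x,y)\perp x,y$, so each of the three summands in \eqref{23''} pairs vectors lying in complementary coordinate $2$-planes (indeed $T(X_1,X_2)=-\lm_3X_3-\lm_4X_4$, $T(X_3,X_4)=-\lm_1X_1-\lm_2X_2$, etc.).

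However, the bookkeeping --- which for this purely computational proposition \emph{is} the proof --- goes wrong at exactly the step you flag as the danger point. You translate $\left(\n'_{X_2}T\right)(X_3,X_1,X_4)=-(\n'_2T)_{143}$; in fact $(3,1,4)$ is a cyclic, hence even, permutation of $(1,4,3)$, so $\left(\n'_{X_2}T\right)(X_3,X_1,X_4)=+(\n'_2T)_{143}=\lm_2^2-\lm_4^2$. With your sign the four derivative terms sum to $-2\left(\lm_2^2-\lm_4^2\right)\neq 0$ in general, so the verification as written fails. Moreover, the cancellation pattern you assert cannot occur: no term of the form $\lm_1\lm_2-\lm_3\lm_4$ (or $\lm_1\lm_4-\lm_2\lm_3$) enters $\dd T(X_1,X_2,X_3,X_4)$ at all, because every component of $\n'T$ carrying such a value in \eqref{NTijks} has its derivative index repeated among its three arguments, whereas each term of $\dd T(X_1,X_2,X_3,X_4)$ involves four distinct indices. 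The correct tally is
\[
\dd T(X_1,X_2,X_3,X_4)=\left(\lm_1^2-\lm_3^2\right)+\left(\lm_2^2-\lm_4^2\right)-\left(\lm_1^2-\lm_3^2\right)-\left(\lm_2^2-\lm_4^2\right)+0=0,
\]
where the four summands are $(\n'_1T)_{234}$, $(\n'_2T)_{314}$, $(\n'_3T)_{124}$ and $-(\n'_4T)_{123}$ respectively: the $\lm_1^2-\lm_3^2$ terms cancel \emph{inside} the cyclic sum, while the $\lm_2^2-\lm_4^2$ term of the cyclic sum cancels against the $-\left(\n'_{X_4}T\right)(X_1,X_2,X_3)$ piece. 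So the architecture of your argument is sound and even leaner than the paper's, but the signs you actually commit to produce a nonzero answer and a fictitious cancellation; the fix is local (correct the parity of $(3,1,4)$ relative to $(1,4,3)$ and redo the assembly), yet as submitted the computation does not establish the claim.
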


In the following theorem we prove that the necessary and
sufficient condition $R'$ to be a Riemannian $P$-tensor and $T$ to
be parallel is one and the same.

\begin{theorem}\label{thm-7.7}
Let $\n'$ be the RPT-connection with curvature tensor $R'$ and
torsion $T$ on the manifold $(G,P,g)$. Then the following
conditions are equivalent:
\begin{romanlist}[(iii)]
    \item $R'$ is a Riemannian $P$-tensor;
    \item $T$ is parallel;
    \item $\lm_3=\varepsilon\lm_1$, $\lm_4=\varepsilon\lm_2$, $\varepsilon=\pm 1$.
\end{romanlist}
\end{theorem}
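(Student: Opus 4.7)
The plan is to prove the two equivalences (ii) $\Leftrightarrow$ (iii) and (i) $\Leftrightarrow$ (ii) separately. The first is purely computational, reading off the vanishing conditions from the explicit components in \eqref{NTijks}; the second is structural, exploiting the closedness $\dd T=0$ from \propref{prop-6.5} as a bridge between $\n' T$ and the first Bianchi-type identity for $R'$.

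For (ii) $\Leftrightarrow$ (iii), $\n'T=0$ is equivalent to the simultaneous vanishing of the four scalars appearing in \eqref{NTijks}, i.e.\ to the system
\[
\lm_1^2=\lm_3^2,\quad \lm_2^2=\lm_4^2,\quad \lm_1\lm_4=\lm_2\lm_3,\quad \lm_1\lm_2=\lm_3\lm_4.
\]
Writing $\lm_3=\eta\lm_1$, $\lm_4=\zeta\lm_2$ with $\eta,\zeta\in\{\pm1\}$, the last two relations become $\lm_1\lm_2(\zeta-\eta)=0$ and $\lm_1\lm_2(1-\eta\zeta)=0$. Whenever $\lm_1\lm_2\neq 0$ this forces $\eta=\zeta$, giving (iii) with $\ep=\eta$; the degenerate cases $\lm_1=0$ or $\lm_2=0$ are handled by choosing the common sign $\ep$ to match whichever of $\lm_3,\lm_4$ is free.

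For (ii) $\Rightarrow$ (i), if $\n'T=0$ then the formula for $\dd T$ displayed just before \propref{prop-6.5}, together with $\dd T=0$, immediately gives $\sigma^T=0$; inserting this into \eqref{29'} yields $\mathop{\s}\limits_{x,y,z}R'(x,y,z,w)=0$, which is property \eqref{10}, and since \eqref{9} and \eqref{11} are automatic for any RPT-connection, $R'$ is a Riemannian $P$-tensor. For the converse (i) $\Rightarrow$ (ii), I would apply \eqref{33'} from the proof of \thmref{thm-4.1} in the form $(\n'_w T)(x,y,z)=-\frac{1}{3}\sigma^T(w,x,y,z)=\frac{1}{3}\sigma^T(x,y,z,w)$, the sign coming from cycling $w$ into the last slot of the 4-form $\sigma^T$. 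Substituting this, together with \eqref{30'}, into $\dd T=0$ collapses everything to $\bigl(-1-\frac{1}{3}+2\bigr)\sigma^T=\frac{2}{3}\sigma^T=0$, and then \eqref{33'} itself forces $\n'T=0$. The main obstacle is precisely this sign bookkeeping for $\sigma^T$ as a 4-form: it is what makes the three contributions to $\dd T=0$ combine into a single scalar multiple of $\sigma^T$ and thereby force it to vanish.
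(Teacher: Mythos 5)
Your proposal is correct, but it takes a genuinely different route on the curvature leg. The paper proves both equivalences by direct component checks: (ii) $\Leftrightarrow$ (iii) from the table \eqref{NTijks} (exactly as you do --- and your case analysis is complete, since $\lm_1^2=\lm_3^2$ forces $\lm_3=0$ whenever $\lm_1=0$, so the degenerate cases close up as you indicate), and (i) $\Leftrightarrow$ (iii) by verifying the Bianchi-type identity \eqref{10} directly against the explicit curvature components \eqref{R'ijkl}, with no structural argument at all. You instead bypass \eqref{R'ijkl} entirely and prove (i) $\Leftrightarrow$ (ii), using the closedness $\dd T=0$ of \propref{prop-6.5} as the bridge. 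Your sign bookkeeping is right: moving $w$ from the first to the last slot of the 4-form $\sigma^T$ is a 4-cycle, hence odd, so $\sigma^T(w,x,y,z)=-\sigma^T(x,y,z,w)$, while $\mathop{\s}_{x,y,z}\sigma^T(x,y,z,w)=3\sigma^T(x,y,z,w)$ because cycling the first three slots is even; with \eqref{30'} and \eqref{33'} the three contributions to $\dd T$ indeed collapse to $\left(-1-\frac{1}{3}+2\right)\sigma^T=\frac{2}{3}\sigma^T$, so $\dd T=0$ forces $\sigma^T=0$ and then \eqref{33'} gives $\n'T=0$; the converse direction via $\dd T=2\sigma^T$ and \eqref{29'} is equally sound, since \eqref{9} and \eqref{11} hold for any RPT-connection as noted in Sec.~\ref{sec-Ptensor}. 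What each approach buys: the paper's is mechanical and self-contained at the level of the component tables; yours is more conceptual and in fact establishes a general statement the paper never articulates --- on any Riemannian almost product $\W_3$-manifold whose RPT-connection has closed torsion (a strong RPT-structure), conditions (i) and (ii) are equivalent --- confining the example-specific content to \propref{prop-6.5} and the single parameter computation (ii) $\Leftrightarrow$ (iii). The one mild caveat is that this added generality is partly nominal here, since \propref{prop-6.5} itself was obtained from the same component data \eqref{54} and \eqref{NTijks}; nevertheless, the structural equivalence (i) $\Leftrightarrow$ (ii) is a genuine strengthening of the paper's argument.
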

\begin{proof}
As we have commented in Sec.~\ref{sec-Ptensor}, the RPT-connection
$\n'$ has a Riemannian $P$-tensor of curvature $R'$ if and only if
\eqref{10} is valid for $R'$. By virtue of \eqref{R'ijkl} we
establish directly that $R'$ satisfies \eqref{10} if and only if
conditions (iii) hold, \ie (i) and (iii) are equivalent.

Using \eqref{NTijks}, we obtain immediately the equivalence of
$\n'T=0$  and (iii), \ie (ii) and (iii) are equivalent.
\end{proof}


\end{document}